\title{\LARGE \bf
  Continuity and approximability of competitive spectral radii
}
\author{Marianne Akian, St\'ephane Gaubert, Lo\"\i c Marchesini, Ian Morris% <-this % stops a space
  %\thanks{This work was not supported by any organization}% <-this % stops a space
  \thanks{MA, SG, LM are with INRIA, CMAP, Ecole Polytechnique, IP Paris, CNRS, Palaiseau, France; IM is with Queen Mary University of London}
\\[1mm]
{\tt\small \{marianne.akian,stephane.gaubert,loic.marchesini\}@inria.fr, i.morris@qmul.ac.uk}
}
\DeclareMathOperator*{\Int}{Int}
\newcommand{\Lip}{\operatorname{Lip}}
\newcommand{\CLip}{\operatorname{CLip}_1}
\newcommand{\End}{\operatorname{End}}
\newcommand{\hausdorff}{\delta_{\mathscr{H}}}
\newcommand{\shapc}{F}
\newcommand{\N}{\mathbb{N}}
\newcommand{\specrad}{r}
\newcommand{\R}{\mathbb{R}}
\newcommand{\strategyMax}{\mathcal{B}^s}
\newcommand{\strategyMin}{\mathcal{A}^s}
\newcommand{\Min}{Min }
\newcommand{\Max}{Max }
\newcommand{\payoff}{J}
\newcommand{\Funk}{\operatorname{Funk}}
\newcommand{\Hil}{\operatorname{Hil}}
\newcommand{\A}{\mathcal{A}}
\newcommand{\B}{\mathcal{B}}
\newtheorem{prop}{Proposition}[section]
\newtheorem{proposition}[prop]{Proposition}
\newtheorem{theorem}[prop]{Theorem}
\newtheorem{Definition}[prop]{Definition}
\newtheorem{corollary}[prop]{Corollary}
\newtheorem{Theorem}[prop]{Theorem}
\newtheorem{Proposition}[prop]{Proposition}
\newtheorem{hypo}{Assumption}[section]
\theoremstyle{remark}
\newtheorem{remark}{Remark}[section]
\newtheorem{example}{Example}[section]
\DeclareMathSymbol{\mlq}{\mathord}{operators}{``}
\DeclareMathSymbol{\mrq}{\mathord}{operators}{`'}
\newcommand*\frob{\mathpalette\bigcdot@{.7}}
\newcommand*\bigcdot@[2]{\mathbin{\vcenter{\hbox{\scalebox{#2}{$\m@th#1\bullet$}}}}}
\def\namedlabel#1#2{\begingroup
    #2%
    \def\@currentlabel{#2}%
    \phantomsection\label{#1}\endgroup
}
\newcounter{savealgorithm}
\begin{document}

\maketitle
\thispagestyle{empty}
\pagestyle{empty}

%%%%%%%%%%%%%%%%%%%%%%%%%%%%%%%%%%%%%%%%%%%%%%%%%%%%%%%%%%%%%%%%%%%%%%%%%%%%%%%%
\begin{abstract}
  The competitive spectral radius extends
  the notion of joint spectral radius to the two-player case:
  two players alternatively select matrices in prescribed compact sets, resulting in an infinite matrix product; one player wishes to maximize the growth rate of this product, whereas the other player wishes to minimize it.
  We show that when the matrices represent linear operators preserving a cone and satisfying a
  ``strict positivity'' assumption, the competitive spectral radius depends continuously --- and even
  in a Lipschitz-continuous way --- on the matrix sets. Moreover, we show that the competive spectral radius can be approximated up to any accuracy. This relies on the solution of a discretized infinite dimensional non-linear eigenproblem. We illustrate the approach with an example
  of age-structured population dynamics. \end{abstract}
\newcommand{\E}{\mathbb{E}}
\newcommand{\proba}{\mathbb{P}}
\newcommand{\cX}{\mathcal{X}}
\newcommand{\relint}{\operatorname{relint}}
\newcommand{\interior}{\operatorname{int}}
\newcommand{\eigenvect}{v}
\newcommand{\eigenvectw}{w}

\section{Introduction}

\subsection{Motivation}
Asarin,  Cervelle,
Degorre, Dima, Horn and Kozyakin introduced \emph{matrix multiplication games} in~\cite{asarin_entropy}, a game-theoretic framework to extend the joint spectral radius to two sets of matrices $\A$ and $\B$. In this repeated zero-sum game, two players, \Min and \Max, with opposing interests alternatively choose matrices $A_k\in \A$ and $B_k \in \B$ in order to either minimize or maximize the following growth rate:
\[
\limsup_{k \to \infty}\lVert A_1 B_1 \cdots A_k B_k \rVert^{1/k} \enspace .
\]
Their original motivation was to bound the topological entropy of transition systems arising in
automata theory.

Asarin et al.\ proved the existence of the value of the game in the special case of sets of nonnegative matrices satisfying a ``rectangularity'' or ``row interchange'' condition, encompassed by the class of {\em entropy games}, further studied by Akian, Gaubert, Grand-Cl\'ement and Guillaud in~\cite{akian2019}. Akian, Gaubert and Marchesini studied the general case (without rectangularity) in \cite{AGM2024}, in the broader setting of \emph{escape rate games}. Here, \Min and \Max control a switched dynamical system in which the dynamics are nonexpansive with respect to a hemi-metric. They respectively want to minimize and maximize the escape rate of the system. Escape rate games have a value, called \emph{competitive spectral radius}~\cite{AGM2024} -- the joint spectral radius being covered by the one-player case (minimizer-free). The {\em lower spectral radius}~\cite{Bochi2014}, also known as the {\em joint spectral subradius}~\cite{Jungers_2009},
corresponds to the one-player maximizer-free case.

Several questions remain open. One of them being about the continuity of the competitive spectral radius as a function of the model parameters (for instance as a function of the sets of matrices). Whereas the joint spectral radius is continuous in the parameters~\cite{Heil1995}, the lower spectral radius can be only upper-semicontinuous~\cite{Jungers_2009}. Hence, the continuity of the competitive spectral spectral radius must require
restrictive conditions. Another question is to approximate the competitive spectral radius.

\subsection{Contributions}

We provide a general condition which guarantees the continuity of the competitive spectral radius, as a function of the model parameters (\Cref{th-continuity}).
In the special case of matrix multiplication games, this condition requires the matrices
to preserve the interior of a closed convex pointed cone and to satisfy a ``strict positivity''
assumption (\Cref{cor-continuity}).
This condition not only ensures the continuity of the competitive spectral radius as a function of the matrix sets, with respect to the Hausdorff distance, but also makes it lipschitizian. In particular, the competitive spectral radius of matrix multiplication games with positive matrices is continuous.  

Next, under the same assumptions, we develop an algorithm allowing one to approximate
the competitive spectral radius up to an arbitrary precision. This is based on
the characterization of the competitive spectral radius as the unique eigenvalue
of a non-linear ``cohomological'' equation, involving $1$-Lipschitz functions
on a cross-section of a cone. An appropriate discretization of this equation
leads to lower and upper bounds for the competitive spectral radius,
converging to the exact value as the meshsize tends to zero
(\Cref{th-effective}). Moreover, we show that the discretized problem can be
solved by reduction to a repeated game with a finite state space, which
we solve by an algorithm coupling the idea of relative value iteration and Krasnoselskii-Mann damping.
We show that the algorithm terminates after $O(1/h^2)$ iterations, where $h$ is the
required precision (\Cref{th-algo}). Every iteration requires
$O(1/h^{2(d-1)})$ arithmetics
operations, hence a total complexity in $O(1/h^{2d})$,
where $d$ is the dimension
of the cone.

We illustrate our algorithm by applying it to a model of population dynamics.

The organisation of the paper is as follows. Section~\ref{sec-II} is dedicated to the definition of the competitive spectral radius and the presentation of background results. In section~\ref{sec-III}, we study the continuity of the competitive spectral radius while in section~\ref{sec-IV}, we present a numerical algorithm to approximate the competitive spectral radius. Finally, in section~\ref{sec-V},
we discuss the population dynamics model and present numerical results.

\subsection{Related works}
The notion of matrix multiplication game was originally introduced by Asarin et al.~\cite{asarin_entropy}, who addressed the ``rectangular'' case, and
raised the question of the computability of the value in general:
``[the] corresponding matrix games no longer have a simple structure
(independent row uncertainty), and we conjecture that analysis of such games is non-computable''.
We answer here this question for families of {\em positive} matrices
that are non-rectangular (i.e.\ without any independent row uncertainty assumption). We show that in this case, the competitive spectral radius is approximable

As a generalization of the joint spectral radius, several properties of the competitive spectral radius stem from the properties of the joint spectral radius (JSR) and its counterpart, the lower spectral radius.
Heil and Strang deduced in~\cite{Heil1995} the continuity of the JSR from the Berger-Wang formula. Epperlein and Wirth showed that the  JSR is H\"older continuous (\cite{Wirth2023}).
Jungers proved
in~\cite[Prop.~1.11]{Jungers_2009}  the upper semi-continuity of the lower spectral radius (under the older name of ``joint spectral subradius''), and gave an example in which it is discontinuous. 
Bochi and Morris showed in~\cite{Bochi2014} that the lower spectral radius of family of invertible matrices is continuous under a \emph{1-domination} or \emph{multicone} condition (see also~\cite{BochiGourmelon,Avila2010}), 
Finally, Breuillard and Fujiwara have recently extended the JSR to families of isometries (\cite{Breuillard2021}). 

Even though the computation  of the JSR is generally undecidable~\cite{Blondel_2000}, several algorithms have been developed for approximating it, see in particular the works by Gripenberg~\cite{Gripenberg_1996}, Ahmadi, Jungers, Parrilo and Roozbehani~\cite{ahmadi}, Protasov~\cite{Protasov2021}, Guglielmi and Protasov~\cite{Guglielmi2011}. In contrast, the problem of computing the lower spectral radius is generally ill-posed (owing in particular to the lack of continuity properties), hence
most of the approaches focus on nonnegative sets of matrices (\cite{Guglielmi2011,Guglielmi2024}).
This explains why we require the operators to preserve a ``small cone'' $K$
in section~\ref{sec-IV}.  In the case of the positive orthant, the small
cone assumption is equivalent to the {\em tubular} condition of
Bousch and Mairesse~\cite{Bousch_2001}. 
Finally,
the combination of relative value iteration and Krasnoselskii-Mann damping
used in section~\ref{sec-IV} is motivated by the works~\cite{stott2020,akianmfcs}.%sg et stott + marianne students

\section{The competitive spectral radius}\label{sec-II}
We now recall the definitions and main properties of the competitive spectral radius,
refering the reader to~\cite{AGM2024} for more information and proofs.

\subsection{Hemi-metric spaces}
We shall need the following notion of asymmetric metric~\cite{GAUBERT_2011}. 
\begin{Definition}[Hemi-metric]
	A map $d:X\times X \to \R$ is a \emph{hemi-metric} on the set $X$ if it satisfies the two following conditions:
	\begin{enumerate}
		\item $\forall(x,y,z) \in X^3, \; d(x,z) \leq d(x,y) + d(y,z)$ (triangular inequality)
		\item $\forall (x,y) \in X^2, \; d(x,y) = d(y,x) = 0$ if and only if $x=y$
	\end{enumerate}
\end{Definition}

Other definitions require $d$ to be nonnegative~\cite{Papadopoulos2008}, but this is undesirable in our approach. A basic example of hemi-metric on $\R^n$ is the map $\delta_n(x,y)\coloneqq \max_{i\in[n]} (x_i-y_i)$.

Given a hemi-metric $d$, the map
\[ d^{\circ}(x,y) = \max(d(x,y), d(y,x)) \enspace,
\]
is always a metric on $X$. 
We will refer to it as the \emph{symmetrized metric} of $d$.
In the sequel, we equip $X$ with the topology induced by $d^{\circ}$.

Given two hemi-metric spaces $(X,d)$ and $(Y,d')$, we introduce the notion of a
{\em $1$-Lipschitz} or {\em nonexpansive} function $\eigenvect$ with respect to $d$ and $d'$. Such a function verifies $d'(\eigenvect(x), \eigenvect(y)) \leq d(x,y)$ for all $x,y \in X$.

\subsection{The Escape Rate Game}

The \emph{Escape Rate Game} is the following two-player deterministic perfect information game. 

We fix two non-empty compact sets $\mathcal{A}$ and $\mathcal{B}$, that will represent the action spaces of the two players. We also fix a hemi-metric space
$(X,d)$, which will play the role of the state space. To each pair
$(a,b) \in \mathcal{A}\times \mathcal{B}$, we associate a nonexpansive
self-map $T_{ab}$ of $(X,d)$. The initial state $x_0 \in X$ is given. We construct inductively a sequence of states
$(x_k)_{k\geq 0}$ as follows. Both players observe the current state $x_k$.
At each turn $k\geq 1$, player \Min chooses an action $a_k \in \mathcal{A}$.
Then, after having observed the action $a_k$,
player \Max chooses an action $b_k \in \mathcal{B}$.
The next state is given by $x_{k}= T_{a_{k} b_{k}}(x_{k-1})$.

The {\em payoff in horizon $k$} is given by:
\begin{equation}
	\payoff_k(a_1 b_1 \dots a_k b_k) = d(T_{a_k b_k} \circ \dots \circ T_{a_1 b_1}(x_0),x_0) \enspace .\label{e-def-jk}
\end{equation}
We now define the \emph{escape rate game}, denoted $\Gamma$. An infinite number of turns are played, and player \Min wishes to minimize the {\em escape rate} or mean-payoff per time unit:
\begin{equation*}
\payoff_\infty(a_1b_1a_2b_2\dots)\coloneqq	\limsup_{k \to \infty}  \frac{\payoff_k (a_1 b_1 \dots a_k b_k)}k \enspace .\label{e-limsup}
\end{equation*} 
while player \Max wants to maximize it. It follows from
the nonexpansiveness of the maps $T_{ab}$ that this limit is independent of the choice of $x_0$.

A {\em strategy} is a map that assigns a move to any finite history of states and actions (for a sequence of turns). We refer the reader to~\cite{AGM2024} for more information.
The set of strategies of \Min (resp \Max) will be written $\strategyMin$ (resp $\strategyMax$). %

For every pair $(\sigma,\tau)\in \strategyMin\times\strategyMax$, 
we denote by $\payoff_k(\sigma,\tau)$ the payoff in horizon $k$,
as per~\eqref{e-def-jk},
assuming that the sequence of actions $a_1,b_1,a_2,b_2,\dots$ is generated according to the strategies $\sigma$ and $\tau$, and similarly, we denote by
$\payoff_\infty(\sigma,\tau)$ the payoff of $\Gamma$, that is the escape rate defined above.

Recall that the {\em value} of a game is the unique quantity that the players can both guarantee. Formally
a game with strategy spaces $\strategyMin$ and $\strategyMax$ and payoff
function $\strategyMin \times \strategyMax \to \R$, $(\sigma,\tau)\mapsto
\payoff(\sigma,\tau)$, {\em has a value} $\lambda \in \R$ if $\forall \epsilon > 0, \; \exists \sigma^*\in\strategyMin, \tau^*\in\strategyMax$ such that
\begin{equation*}
\payoff(\sigma^*, \tau) \leq \lambda + \epsilon \text{ and }\payoff(\sigma, \tau^*) \geq \lambda - \epsilon 
\end{equation*}
for all strategies $\sigma\in\strategyMin$ and $\tau\in\strategyMax$. Such strategies are called $\epsilon$-optimal strategies. When $\epsilon=0$,
one gets \emph{optimal strategies}, and in particular, $\lambda= \payoff(\sigma^*, \tau^*)$.

\subsection{The example of cones and the Funk metric}
In this paper we will focus on escape rate games where the hemi-metric space considered is the interior of a closed, convex and pointed cone $C \subset \R^n$ equipped with the Funk metric. This hemi-metric is defined as
\[
\Funk(x,y) = \log \inf \{\lambda > 0 \mid x \leq_C \lambda y \}\enspace ,
\]
where $\leq_C$ is the natural partial order on the cone ($x\leq_C y$ if $y-x\in C$).
Note that $\Funk(x,y)$ can be defined for all $(x,y) \in C\times \Int C$, but for the symmetrized Funk metric of $x$ and $y$ to exists we need $x$ and $y$ to be comparable, i.e.\ $\exists \mu, \lambda>0$ such that $\mu x \leq y \leq \lambda x$. The subsets of comparable elements form a partition of $C$ and are called the \emph{parts} of the cone $C$. We mostly consider the interior of the cone as it is often the most interesting part. The symmetrization of the Funk metric is called the Thompson or Thompson's part metric, and it defines on each part the same topology as the one induced by any norm on $\R^n$.
The Funk metric is closely related to the Hilbert's projective metric defined as
\[
\Hil(x,y)= \Funk(x,y) + \Funk(y,x)
\]
If $T$ is a self-map of a part $P$ of $C$, that satisfies $x\leq_C y\Rightarrow T(x)\leq_C T(y)$ and $T(\alpha x) = \alpha T(x)$
for all $x,y\in P$ and $\alpha>0$, it is immediate to check
that $T$ is non-expansive un the Funk hemi-metric.
It follows that it is also nonexpansive in the Thompson and Hilbert metric.\todo{SG: added this -- this was used but not said}
\begin{example}
Consider the nonnegative orthant $C=\R_{\geq 0}^n$. In this case, for $x,y\in (\R_{>0})^n = \Int C$, we have $\Funk(x,y) =\log \max_{i\in[n]} x_i/y_i$. Then, we take as actions spaces two sets $\A$ and $\B$ of matrices
that preserve the positive orthant. Every pair $(A,B)\in\A\times B$ determines
an operator:
\[
T_{AB}(x)=xAB\enspace .
\]
Taking $x_0=e$ (the unit vector), and setting $x_{k}=x_{k-1}A_kB_k=T_{A_kB_k}(x_k)$,
we see that
\[ J_k(A_1B_1\dots A_kB_k) = \Funk(x_k,x_0)=\log  \max_{i\in[n]} (x_k)_i
\enspace.
\]
Hence, we retrieve the matrix multiplication games model of~\cite{asarin_entropy}, in the case of nonnegative matrices (preserving the interior of the orthant).
\end{example}
\begin{example}
  Suppose now that $\A$ and $\B$ are sets of $n\times n$ invertible matrices.
  We now take for $C$ the cone $S_n^+$ of positive semidefinite matrices.
  Then, for all $X,Y\in \Int S_n^+$, we have $\Funk(X,Y)= \log\lambda_{\max}(XY^{-1})$. 
  To every pair $(A,B)\in\A\times \B$, we associate the congruence  operator
\[
T_{AB}(x)= (AB)^*x(AB)
\]
which preserves $\Int S_n^+$. Setting $x_0=I$ (the identity matrix)
and $x_k = T_{A_kB_k}(x_{k-1})$, we get
\begin{align*}
J_k(A_1B_1\dots A_kB_k) &= \Funk(x_k,x_0)\\&=2 \log  \sigma_{\max}(A_1B_1\dots A_kB_k)
\enspace,
\end{align*}
where $\sigma_{\max}$ denotes the maximal singular value.
In this way, we recover matrix multiplication games with invertible matrices.
\end{example}
\subsection{Characterizations of the competitive spectral radius}
{\em The proofs of the results in this section can be found in~\cite{AGM2024}.}

Given a hemi-metric space $(X,d)$, we denote by $\Lip_1(X)$, the space of functions from $X$ to $\R$ which are 1-Lipschitz with respect to $d$ and the hemi-metric on $\R$, $\delta_1: (x,y) \mapsto x-y$. We introduce the \emph{Shapley operator} $S$ of an escape rate game $\Gamma((X,d),(T_ab),\A,\B)$. It sends $\Lip_1$ to itself, as follows:
\[
Sv(x) \coloneqq \inf_{a \in \A} \sup_{b \in \B}v(T_{ab}(x))\enspace .
\]

Generally, a Shapley operator $S$ is a map from $\R^X$ to $\R^X$, which is additively homogeneous, i.e $S (\lambda e +v)= \lambda e + Sv$ and monotone with respect to the natural partial order on $\R^X$. If $(\lambda, v) \in \R \times \R^X$ verify $Sv = \lambda + v$ (Resp. $Sv \leq \lambda + v$, $Sv \geq \lambda + v$, they will be called additive eigenvalue and eigenvector (Resp. additive sub-eigenvalue, additive super-eigenvalue).

We introduce the sequence $(s_k)_{k \in \N}$ defined as follows
\[
s_k \coloneqq [S^k d(\cdot,x_0)](x_0)
\]
It is subadditive and therefore Fekete's subadditive lemma tells us that 
\[
\lim_{k \to \infty} \frac{s_k}{k} = \inf_{k \in \N} \frac{s_k}{k} \in \R \enspace .
%\cup \{- \infty\} 
\]
    {\em In the sequel, we will consider families of nonexpansive operators satisfying the following assumption.}
\begin{hypo}\label{assump-1}
For all $x\in X$,  
the maps $b \mapsto T_{ab}(x)$ and $a \mapsto T_{ab}(x)$ are continuous,
  and for all compact sets $K$, the set $\{T_{ab}(x) \ | \ (a,b,x) \in \mathcal{A} \times \mathcal{B} \times K\}$ is compact, 
\end{hypo}
 We next state a general theorem about the existence and characterization of the value of an escape rate game. 
\begin{Theorem}\label{main}
  The escape Rate Game has a value $\rho$ given by
	\begin{align*}
	\rho &= \lim_{k\to\infty}\frac{[S^k d(\cdot,\bar{x})](\bar x)}{k} \\
       &= \max\{ \lambda \in \R \mid \exists\; v \in \Lip_1, \lambda + v \leq Sv \}\\
       &= \max_{v \in \Lip}\inf_{x \in X}(Sv(x)-v(x)) \enspace .
  \end{align*}
\end{Theorem}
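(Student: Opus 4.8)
The plan is to combine the dynamic programming principle in finite horizon with an ergodic fixed-point argument for the Shapley operator $S$. Write $\bar x$ for the chosen base point, $w:=d(\cdot,\bar x)\in\Lip_1$, and $\rho:=\lim_k s_k/k=\inf_k s_k/k$, the limit existing by the Fekete lemma recalled above. First I would prove, by backward induction on the horizon and using the order of play (Min then Max at each turn), that the game played over $k$ turns with terminal payoff $g$ from initial state $x$ has value $(S^kg)(x)$; \Cref{assump-1} is what makes the relevant infima and suprema behave well enough here. Taking $g=w$ and $x=\bar x$ identifies the horizon-$k$ value with $s_k=(S^kw)(\bar x)$, which is the first displayed equality.

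I would then record two easy comparisons. Recall $S$ maps $\Lip_1$ into itself and, being monotone and additively homogeneous, is continuous for local uniform convergence (using \Cref{assump-1} so that images of compact sets are compact). If $\lambda+v\le Sv$ with $v\in\Lip_1$, then, by monotonicity and additive homogeneity, $k\lambda+v\le S^kv$; since $v$ is $1$-Lipschitz we have $v\le v(\bar x)\,e+w$, hence $S^kv\le v(\bar x)\,e+S^kw$, and evaluating at $\bar x$ gives $k\lambda\le s_k$, so $\lambda\le\rho$. Conversely, such a $v$ provides Max with the stationary strategy ``after Min plays $a_k$, pick $b_k$ attaining $\sup_b v(T_{a_kb}(x_{k-1}))\ge\lambda+v(x_{k-1})$'', which forces $v(x_k)\ge v(\bar x)+k\lambda$, hence $J_k=d(x_k,\bar x)\ge v(x_k)-v(\bar x)\ge k\lambda$ by $1$-Lipschitzness; so Max guarantees at least $\lambda$. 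Thus the game value, once it is known to exist, is at least $\lambda^\star:=\sup\{\lambda\in\R:\exists v\in\Lip_1,\ \lambda+v\le Sv\}\le\rho$. To see $\lambda^\star$ is attained I would take $\lambda_n\uparrow\lambda^\star$ with witnesses $v_n$, replace each $v_n$ by $v_n-v_n(\bar x)\,e\in\Lip_1$ (still a witness, by homogeneity), extract by Arzel\`{a}--Ascoli a subsequence converging locally uniformly to some $v^\star\in\Lip_1$ (the family is equi-Lipschitz and pinned at $\bar x$), and pass to the limit in $\lambda_n+v_n\le Sv_n$ using continuity of $S$; combined with the identity $\max\{\lambda:\lambda+v\le Sv\}=\inf_x(Sv(x)-v(x))$, this also gives that the second and third expressions agree.

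For ``value $\le\rho$'' I would use a rolling-horizon strategy for Min. Given $\epsilon>0$, fix $N$ with $s_N/N\le\rho+\epsilon$ and let Min play in blocks of $N$ turns: at the start of block $m$, the state $x_{mN}$ is observed, and Min plays the horizon-$N$ strategy (near-)optimal for terminal payoff $w$ from $x_{mN}$, which guarantees $d(x_{(m+1)N},\bar x)\le(S^Nw)(x_{mN})$. As $S^Nw\in\Lip_1$, this is at most $s_N+d(x_{mN},\bar x)$, so $J_{(m+1)N}\le s_N+J_{mN}$, hence inductively $J_{mN}\le m\,s_N$; for $mN\le k<(m+1)N$ the triangle inequality gives $J_k\le d(x_k,x_{mN})+J_{mN}$, and the term $d(x_k,x_{mN})$ must be bounded by an $o(m)$ quantity, using the nonexpansiveness of the $T_{ab}$ and, if necessary, a slight modification of Min's strategy that keeps the visited states in a bounded region (so that \Cref{assump-1} controls the states reachable within one block). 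Dividing by $k$ and letting $k\to\infty$, then $\epsilon\to0$, gives $\limsup_kJ_k/k\le\rho$ along this strategy.

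The remaining step, which I expect to be the main obstacle, is $\lambda^\star=\rho$ --- equivalently, that Max can guarantee at least $\rho-\epsilon$ --- which closes the loop: together with the two previous paragraphs it shows that $\Gamma$ has value $\rho=\lambda^\star=\max_{v\in\Lip_1}\inf_{x\in X}(Sv(x)-v(x))$. I would attack it by a block argument symmetric to the one for Min, in which Max re-solves the horizon-$N$ game optimally from the current state, the content being that the finite-horizon quantities $s_N$ ``transfer'' to the infinite play up to errors controlled by nonexpansiveness and the triangle inequality; equivalently, one constructs an additive sub-eigenvector with eigenvalue exactly $\rho$ by a compactness argument on suitably normalized iterates of $S$. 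This is delicate because the obvious candidates $\sup_k(S^kw-k\rho\,e)$ and $\inf_k(S^kw-k\rho\,e)$ need not be finite, and because the asymmetry of the hemi-metric $d$ rules out the naive dual of the Max-strategy argument above (a $1$-Lipschitz \emph{super}-eigenvector does not bound $J_k$ from above); so the boundedness of the relevant orbit segments and the change-of-origin error terms have to be handled by hand, leaning throughout on \Cref{assump-1} and the nonexpansiveness of the maps $T_{ab}$.
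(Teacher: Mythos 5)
There is a genuine gap, and you have in fact named it yourself. Note first that this paper does not contain a proof of \Cref{main}: it explicitly defers all proofs of Section~\ref{sec-II} to the reference \cite{AGM2024}, so the comparison can only be against what the theorem actually requires. The parts you do carry out are sound in outline: the backward-induction identification of $(S^k w)(\bar x)$ with the horizon-$k$ value, the easy inequality $\lambda^\star\le\rho$ obtained by iterating $\lambda+v\le Sv$, the observation that such a $v$ lets \Max{} guarantee $\lambda$ (this direction works precisely because the payoff is $d(x_k,\bar x)$ and $v$ is $1$-Lipschitz for the \emph{hemi}-metric, so $v(x_k)-v(\bar x)\le d(x_k,\bar x)$), the rolling-horizon argument showing \Min{} can guarantee $\limsup_k J_k/k\le\rho$, and the identity $\max\{\lambda:\lambda+v\le Sv\}=\inf_x(Sv(x)-v(x))$ linking the second and third formulas. (Even here there is a secondary loose end: your bound on $d(x_k,x_{mN})$ for $mN\le k<(m+1)N$ involves one-step displacements $d(T_{ab}(y),y)$, which nonexpansiveness does not control and which can grow with $d^\circ(x_{mN},\bar x)$, itself potentially of order $m$; this needs a genuine argument, not just ``a slight modification''.)

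The missing ingredient is the crux of the theorem: the existence of $v\in\Lip_1$ with $\rho+v\le Sv$ for $\rho=\inf_k s_k/k$ \emph{exactly}, equivalently that \Max{} can guarantee $\rho-\epsilon$, equivalently that the supremum $\lambda^\star$ equals $\rho$ and is attained. Your Arzel\`a--Ascoli argument only shows that $\lambda^\star$ is attained as a maximum; it gives no information on whether $\lambda^\star=\rho$ or $\lambda^\star<\rho$, and without that the value might fail to exist (the two players would only be shown to guarantee $\lambda^\star$ and $\rho$ respectively, with a possible gap between them). You correctly diagnose why the standard ergodic-optimization candidates fail: for $v:=\inf_k(S^kw-k\rho)$ one only gets $S(\inf_k f_k)\le\inf_k Sf_k$ from monotonicity (the $\sup_b\inf_k\le\inf_k\sup_b$ direction), which is the wrong inequality for producing $Sv\ge\rho+v$; and the asymmetry of $d$ blocks the dual construction. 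This existence result is precisely the technical heart of \cite{AGM2024}, where it requires machinery (a compactness/limit argument on normalized iterates, of Kohlberg--Neyman or horofunction type, adapted to hemi-metric spaces under \Cref{assump-1}) that your sketch gestures at but does not supply. As written, the proposal establishes only $\lambda^\star\le\rho$ and the upper bound for \Min, i.e., it proves neither the existence of the value nor the claimed equalities.
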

    
For cones equipped with the Funk Metric, we can obtain a dual characterization in terms of distance-like function.
\begin{Definition}
A $1$-Lipschitz function $v: X\to \R$ is {\em distance-like} if there exist
$x_0\in X$ and a constant $\alpha\in \R$ such that\
\(
v(x) \geq \alpha+ d(x,x_0), \quad \forall x\in  X
\).
\end{Definition}

\begin{Proposition}
	\label{dlike}
  Suppose that there exists a distance-like function $v$ such that
  \(
  S v\leq \lambda + v \),
  for some $\lambda\in \R$. Then, the value $\rho$ of the escape rate game satisfies $\rho \leq \lambda$.
\end{Proposition}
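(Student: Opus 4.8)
The plan is to combine the asymptotic characterization of $\rho$ from \Cref{main} with the monotonicity and additive homogeneity of the Shapley operator $S$, by squeezing the orbit $(S^k d(\cdot,x_0))_k$ between $S^k v$ from above (via the sub-eigenvalue inequality) and $\alpha + S^k d(\cdot,x_0)$ from below (via the distance-like bound).

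First I would fix $x_0 \in X$ and $\alpha \in \R$ witnessing the distance-like property, i.e.\ $v \geq \alpha + d(\cdot,x_0)$ pointwise on $X$. I would observe that $d(\cdot,x_0)$ belongs to $\Lip_1(X)$: the triangle inequality gives $d(x,x_0) - d(y,x_0) \leq d(x,y)$ for all $x,y$, so $d(\cdot,x_0)$ is $1$-Lipschitz from $(X,d)$ to $(\R,\delta_1)$; hence $S$ may be legitimately iterated on it, and $s_k = [S^k d(\cdot,x_0)](x_0)$ is precisely the sequence appearing in \Cref{main} (with base point $x_0$), whose normalized limit equals $\rho$, independently of the base point, as recalled earlier.

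Next I would iterate the two inequalities separately. From $Sv \leq \lambda + v$, applying $S$ repeatedly and using monotonicity together with additive homogeneity, a straightforward induction yields $S^k v \leq k\lambda + v$ for all $k$. From $v \geq \alpha + d(\cdot,x_0)$, applying $S^k$ and again using monotonicity and additive homogeneity to pull the constant $\alpha$ out, one obtains $S^k v \geq \alpha + S^k d(\cdot,x_0)$. Chaining the two gives, as functions on $X$,
\[
\alpha + S^k d(\cdot,x_0) \;\leq\; S^k v \;\leq\; k\lambda + v \enspace .
\]

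Finally I would evaluate this chain at the point $x_0$, obtaining $\alpha + s_k \leq k\lambda + v(x_0)$, divide by $k$, and let $k \to \infty$; since $s_k/k \to \rho$ by \Cref{main} and $(v(x_0)-\alpha)/k \to 0$, this gives $\rho \leq \lambda$. The argument is essentially mechanical; the only points requiring a moment's care are the verification that $d(\cdot,x_0) \in \Lip_1$, so that the characterization of \Cref{main} applies verbatim, and the fact that the asymptotic value does not depend on the chosen base point, which is what lets us use the particular $x_0$ supplied by the distance-like hypothesis.
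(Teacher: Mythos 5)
Your proof is correct. The paper itself does not reproduce an argument for this proposition (it defers all proofs in that subsection to the reference \cite{AGM2024}), but the squeeze you give --- iterating $Sv\leq\lambda+v$ to get $S^kv\leq k\lambda+v$, iterating the distance-like bound to get $\alpha+S^kd(\cdot,x_0)\leq S^kv$, evaluating at $x_0$ and dividing by $k$ --- is exactly the standard argument, and you correctly flag the two points that need checking (that $d(\cdot,x_0)\in\Lip_1$ so the orbit is well defined, and that the limit in \Cref{main} is base-point independent).
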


On a part of a cone $C$, the topologies induced by the Funk metric and a norm are the same (see \cite{nussbaumlemmens} corollary 2.5.6). So a continuous map for the Funk metric will also be continuous with respect to the euclidean topology. The issue will be at the boundary of the cone where the two topologies differ, but as long as the operators $(T_{ab})$ can be continously extended to the whole cone, we get the following theorem.

\begin{Theorem}[Dual Characterization] \label{th-dual}
  Suppose that every operator $T_{ab}$ extends continuously to the closed cone $C$, with respect to the Euclidean topology. 
		The value $\rho$ of the escape rate games played on the interior of $C$ has the following dual characterization
		\begin{align}\label{e-dual}
		\rho = \inf\{\lambda \in \R \mid \exists v \in \mathscr{D}_C, Sv \leq \lambda + v \}
		\end{align}
where $\mathscr{D}_C$ is the subset of continuous (for the euclidean topology) function on $C$ which are 1-Lipschitz on $\Int C$. 
\end{Theorem}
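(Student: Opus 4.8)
The plan is to prove the two inequalities between $\rho$ and $\mu:=\inf\{\lambda\in\R\mid \exists\, v\in\mathscr D_C,\ Sv\le\lambda+v\}$. For $\rho\le\mu$ it suffices to check that every $v\in\mathscr D_C$, restricted to $\Int C$, is distance-like in the sense of the Definition preceding Proposition~\ref{dlike}: then for each admissible $\lambda$, Proposition~\ref{dlike} gives $\rho\le\lambda$, hence $\rho\le\mu$. To prove the distance-like property, fix $x_0\in\Int C$ and let $\Sigma:=\{x\in C\mid\|x\|=1\}$, a Euclidean-compact set not containing the origin. Since $v$ is Euclidean-continuous it is bounded below on $\Sigma$, say $v\ge -M$ there; and since $x_0\in\Int C$ the gauge $y\mapsto\Funk(y,x_0)=\log\inf\{\mu>0\mid y\leq_C\mu x_0\}$ is finite and continuous, hence bounded above on $\Sigma$, say by $N$. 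Writing $x\in\Int C$ as $x=ty$ with $t=\|x\|$ and $y:=x/\|x\|\in\Sigma\cap\Int C$, the $1$-Lipschitz inequality applied to the comparable pair $(y,x)$ gives $v(y)-v(x)\le\Funk(y,x)=-\log t$, while $\Funk(x,x_0)=\log t+\Funk(y,x_0)$; eliminating $\log t$ yields $v(x)\ge -M+\log t\ge d(x,x_0)-M-N$ for all $x\in\Int C$, which is the required inequality with constant $\alpha=-M-N$.

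For the reverse inequality $\mu\le\rho$ one must produce, for each $\epsilon>0$, a function $v\in\mathscr D_C$ with $Sv\le(\rho+\epsilon)+v$, and I would proceed in two steps. First, obtain a $1$-Lipschitz super-solution $v_0$ on $\Int C$ at level $\rho+\epsilon$, i.e.\ $Sv_0\le(\rho+\epsilon)+v_0$; this follows from the theory of~\cite{AGM2024}, or can be produced directly by a standard vanishing-discount or relative value iteration argument in the spirit of Section~\ref{sec-IV}: the normalized iterates are $1$-Lipschitz, vanish at $x_0$, hence equi-bounded on compact subsets of $\Int C$, so by Assumption~\ref{assump-1} and the Arzel\`a--Ascoli theorem a subsequence converges uniformly on compacts to a $v_0$ solving $Sv_0=\rho+v_0$, the ergodic constant being identified as $\rho$ via Theorem~\ref{main} and Fekete's lemma. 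Second, set $w:=Sv_0-(\rho+\epsilon)$. Using that $S$ is monotone and additively homogeneous, $Sw=S(Sv_0)-(\rho+\epsilon)\le S\big((\rho+\epsilon)+v_0\big)-(\rho+\epsilon)=Sv_0=w+(\rho+\epsilon)$, so $w$ is again a $1$-Lipschitz super-solution at level $\rho+\epsilon$; moreover, because every $T_{ab}$ extends Euclidean-continuously to the closed cone $C$, the function $w(x)=\inf_{a}\sup_{b}v_0(T_{ab}x)-(\rho+\epsilon)$ is Euclidean-continuous on $C$ — one application of the Shapley operator, composed with the continuous extensions of the $T_{ab}$, repairs the behaviour at $\partial C$ — so $w\in\mathscr D_C$ (and, by the first part, automatically distance-like). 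Hence $\mu\le\rho+\epsilon$ for every $\epsilon>0$, i.e.\ $\mu\le\rho$.

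The substantive difficulty is the reverse inequality $\mu\le\rho$. \emph{Sub}-solutions of the ergodic equation always exist (Theorem~\ref{main}), but \emph{super}-solutions at a level close to $\rho$ need not in general — this is precisely the mechanism behind the discontinuity of the lower spectral radius — so the existence of the $1$-Lipschitz super-solution $v_0$ (the cone structure together with Assumption~\ref{assump-1} being what makes it available), and the passage to the limit through the $\inf_a\sup_b$ while controlling the equi-boundedness of the normalized iterates, are the delicate points; the non-compactness of $C$ along rays and the boundary behaviour of the Funk metric are minor technical issues, handled — as in the first part — through the logarithmic homogeneity that $1$-Lipschitzness for the Funk metric forces. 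Finally, the hypothesis that the $T_{ab}$ extend continuously to the closed cone is used precisely at the last step, to promote the super-solution $w$ to an element of $\mathscr D_C$ by transporting Funk-continuity on $\Int C$ through the continuous extensions of the maps up to $\partial C$.
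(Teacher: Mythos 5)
The paper does not actually contain a proof of \Cref{th-dual}: Section~\ref{sec-II} opens with the statement that all proofs of that section are deferred to~\cite{AGM2024}, so your argument has to stand on its own. Your first direction, $\rho\le\mu$, is essentially right: the homogeneity computation showing that every $v\in\mathscr{D}_C$ is distance-like on $\Int C$ is correct, and \Cref{dlike} then gives $\rho\le\lambda$ for every admissible $\lambda$. One caveat you should make explicit: the argument needs $v$ to be \emph{finite} on all of $\Sigma\cap C$, boundary points included, to get the uniform lower bound $-M$. That is indeed the only reading of $\mathscr{D}_C$ under which the theorem can hold: on $C=\R_{\geq 0}^2$ with the single map $T(x)=x\operatorname{diag}(1,2)$, the function $v(x)=\log x_1$ is $1$-Lipschitz on $\Int C$ and satisfies $Sv= v\le 0+v$, while $\rho=\log 2$; such $v$ must be excluded from $\mathscr{D}_C$ precisely because it is $-\infty$ on the face $\{x_1=0\}$.

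The genuine gap is in the direction $\mu\le\rho$, and it is twofold. First, the existence of a $1$-Lipschitz super-solution at level $\rho+\epsilon$ is the hard content of the theorem, and the Arzel\`a--Ascoli sketch does not deliver it: a subsequential limit of normalized iterates solves $Sv_0=\mu'+v_0$ for \emph{some} $\mu'$, and identifying $\mu'=\rho$ via the uniqueness proposition requires $v_0$ to be distance-like, which the limit need not be --- in the example above $v_0=\log x_1$ is an exact eigenvector with eigenvalue $0\ne\rho$. Second, the ``one application of $S$ repairs the boundary'' step is false. If $v_0$ is an exact eigenvector then $w:=Sv_0-(\rho+\epsilon)=v_0-\epsilon$, so nothing whatsoever is repaired; and in general the continuous extensions of the $T_{ab}$ map $C$ into $C$ but typically map $\partial C$ into $\partial C$ (every linear map fixes $0$, and $\operatorname{diag}(1,2)$ preserves the face $\{x_1=0\}$), so $x\mapsto v_0(T_{ab}x)$ inherits on $\partial C$ exactly the singularities of $v_0$. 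Continuity of the extension of $T_{ab}$ gives no control on $v_0\circ T_{ab}$ near boundary points whose images remain in the boundary. A correct construction has to produce a super-solution that is sandwiched between two translates of $\Funk(\cdot,x_0)$ \emph{and} is continuous up to $\partial C\setminus\{0\}$; neither property follows from taking an arbitrary $\Lip_1(\Int C)$ super-solution and applying $S$ once, so this direction needs a different argument (this is where the deferral to~\cite{AGM2024} is doing real work).
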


This result stems from the properties of nonexpansive operators for the Funk metric. They are positively homogeneous of degree 1, i.e for all $(\lambda,x) \in \R_{>0} \times \Int C, T_{ab}(\lambda x) = \lambda T_{ab}(x)$. Moreover 1-Lipschitz functions are \emph{log-homogeneous}, i.e.\ if $v \in \Lip_1(\Int C)$, for all $(\lambda,x) \in \R_{>0} \times \Int C, v(\lambda x) = \log(\lambda) + v(x)$, and so is the Funk metric in its first variable.

It will be convenient, in~\Cref{sec-IV},
to solve a ``normalized'' version of the eigenproblem~\eqref{e-dual}, restricting the function $v$ to a cross-section of the cone $C$.
We set $\Delta \coloneqq \{x \in C \mid \langle x,e^* \rangle = 1 \}$, where $e^*$ is a point in the interior of the dual cone $C^*$.
We consider the space $\CLip(\Delta) \coloneqq \mathscr{C}(\Delta)\cap \Lip_1(\relint\Delta)$ equipped with the hemi-metric $\delta : (f,g) \mapsto \max_{x \in \Delta}(f(x) - g(x))$ (see \cite{GAUBERT_2011}, \cite{AGM2024} for more details).

We define the operator $\shapc$, sending a function $v\in \CLip(\Delta)$
to the function 
\[
\shapc v(x) \coloneqq \inf_{a \in \mathcal{A}}\sup_{b \in \mathcal{B}}\log(\langle T_{ab}(x),e^*\rangle) + v\Big(\frac{T_{ab}(x)}{\langle T_{ab}(x),e^*\rangle}\Big) \enspace.
\]
\begin{remark}
Notice that if we evaluate $\shapc v$ at a function $v$ belonging to $\Lip_1(\Int C)$, we get exactly $Sv$. % operator on $\Lip_1(\Int C)$, it is exactly $S$.
\end{remark}
\begin{proposition}
  The operator $\shapc$ preserves $\CLip(\Delta)$.
\end{proposition}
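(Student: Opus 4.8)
The plan is to deduce the proposition from the already-quoted fact that the Shapley operator $S$ maps $\Lip_1(\Int C)$ into itself, by recognizing that $\shapc$ is just $S$ read through the correspondence between functions on the cross-section $\Delta$ and log-homogeneous functions on $\Int C$. To $v\in\CLip(\Delta)$ I would associate its \emph{conical lift} $\hat v\colon\Int C\to\R$, $\hat v(x):=\log\langle x,e^*\rangle+v\!\left(x/\langle x,e^*\rangle\right)$, which satisfies $\hat v(\alpha x)=\log\alpha+\hat v(x)$ for $\alpha>0$ and restricts to $v$ on $\relint\Delta$; here I use that $\relint\Delta\subset\Int C$ carries the Funk metric inherited from $\Int C$.

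The first step is to check $\hat v\in\Lip_1(\Int C)$. Using the homogeneity identity $\Funk(\alpha x,\beta y)=\log(\alpha/\beta)+\Funk(x,y)$ for $\alpha,\beta>0$, one gets $\Funk\!\left(x/\langle x,e^*\rangle,\,y/\langle y,e^*\rangle\right)=\log\!\left(\langle y,e^*\rangle/\langle x,e^*\rangle\right)+\Funk(x,y)$; feeding the $1$-Lipschitz bound for $v$ into the definition of $\hat v$, the two logarithmic corrections cancel the term $\log\langle x,e^*\rangle-\log\langle y,e^*\rangle$ and leave $\hat v(x)-\hat v(y)\le\Funk(x,y)$. (This is the standard bijection between $\Lip_1$ functions on a part of a cone and log-homogeneous $\Lip_1$ functions on its interior, cf.\ \cite{GAUBERT_2011,AGM2024}.) The second step is the identity: for $x\in\relint\Delta$ the summand defining $\shapc v(x)$ is exactly $\hat v(T_{ab}(x))$, so $\shapc v(x)=\inf_{a}\sup_{b}\hat v(T_{ab}(x))=(S\hat v)(x)$. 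Since $S$ preserves $\Lip_1(\Int C)$, $S\hat v$ is $1$-Lipschitz on $\Int C$, and a fortiori on $\relint\Delta$; hence $\shapc v\in\Lip_1(\relint\Delta)$. (The same conjugation makes clear that $\shapc$ and $S$ share the additive eigenvalue $\rho$, which is what legitimizes working with the normalized eigenproblem in \Cref{sec-IV}.)

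It remains to show $\shapc v$ is continuous on all of $\Delta$, which I expect to be the only delicate point: on the relative boundary of $\Delta$ the Funk and Euclidean topologies differ, so the previous argument says nothing there. Here I would invoke the standing hypotheses, namely that each $T_{ab}$ extends continuously to $C$ for the Euclidean topology and is strictly positive (maps $C\setminus\{0\}$ into $\Int C$), so that $T_{ab}(x)\in\Int C$ for every $x\in\Delta$ and every $(a,b)$. By compactness of $\Delta,\A,\B$ together with the joint continuity of \Cref{assump-1}, the numbers $\langle T_{ab}(x),e^*\rangle$ are then bounded above and below by positive constants uniformly in $(x,a,b)$, and $(x,a,b)\mapsto T_{ab}(x)/\langle T_{ab}(x),e^*\rangle$ is a continuous map into $\Delta$. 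Composing with $v\in\mathscr{C}(\Delta)$ shows that $(x,a,b)\mapsto\log\langle T_{ab}(x),e^*\rangle+v\!\left(T_{ab}(x)/\langle T_{ab}(x),e^*\rangle\right)$ is continuous, hence uniformly continuous, on the compact set $\Delta\times\A\times\B$; the family of maps $x\mapsto\sup_{b\in\B}(\cdots)$ indexed by $a\in\A$ is thus equicontinuous, and its infimum $\shapc v$ is continuous on $\Delta$. Together with the second step this gives $\shapc v\in\CLip(\Delta)$.

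The main obstacle is indeed this boundary behaviour: the $\Lip_1$ part reduces formally to the quoted property of $S$, but continuity at the relative boundary of $\Delta$ does not, and it is precisely the strict positivity assumption that pushes $T_{ab}$ of a boundary point of $\Delta$ into $\relint\Delta$ (where $v$ is continuous) while keeping the normalizing factor $\langle T_{ab}(x),e^*\rangle$ away from $0$ and $\infty$, so that normalizing and composing with $v$ does not destroy continuity.
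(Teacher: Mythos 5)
Your proof is correct and follows essentially the route the paper itself indicates: the text immediately after the proposition (and the reference \cite{AGM2024}, to which the proofs of that subsection are deferred) rests on exactly the same correspondence between $v\in\CLip(\Delta)$ and its log-homogeneous lift to $\Int C$, under which $\shapc$ is conjugate to $S$ and the $\Lip_1$ part follows from $S$ preserving $\Lip_1(\Int C)$. You also correctly isolate the boundary continuity as the point where the strict positivity of the operators (announced in the abstract, though not restated as a formal hypothesis in that subsection) is genuinely needed to keep $T_{ab}(x)$ away from $0$ and the normalization well defined on all of $\Delta$.
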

Every function $v \in \CLip(\Delta)$ extends
to a function $\bar{v} \in \mathscr{D}_C$ by setting
$\bar v(\lambda x) = \log(\lambda) + v(x)$ for all $\lambda>0$ and $x\in \Delta$. We verify that $\shapc v\leq \lambda +v$ iff $S\bar{v} \leq \lambda +\bar{v}$.
Conversely, every function $w\in \mathscr{D}_C$ satisfying $Sw\leq \lambda +w$
restricts to a function $v\in \CLip(\Delta)$ satisfying $\shapc v\leq \lambda +v$.
Note that $w$ is determined uniquely by its restriction $v$. Therefore,
the eigenproblem for $\shapc $ is a reformulation of the eigenproblem for $S$.
This reduction will prove to be very useful to compute numerically the competitive spectral radius.

\section{Continuity of the competitive spectral radius}\label{sec-III}

We now explore the continuity of the competitive spectral radius with respect to the family of operators $T_{a,b}$ and to the actions spaces $\A$ and $\B$. 

We suppose that the action spaces are compact subsets of a metric space $(E,\delta)$. We equip the space of compact subsets of $E$ the Hausdorff distance, denoted $\hausdorff$.
Given a family
of nonexpansive self-maps of $X$, $T=(T_{a,b})_{(a, b)\in E\times E}$,
and compact subsets $\A\subset E$ and $\B\subset E$, we denote
by $\rho(T,\A,\B)$ the value of the associated game.
The following 
theorem provides a condition for $\rho$ to be Lipschitz continuous as a function
of the action spaces and of the operator.
\begin{Theorem}\label{th-continuity}
  Let $T=(T_{a,b})_{(a,b)\in E\times E}$ and
  $T'=(T'_{a,b})_{(a,b)\in E\times E}$ be two families
  of maps such that
  there exists $\epsilon \geq 0$
and $L>0$ such that, 
  for all
  $(a, b), (a',b') \in E\times E$,
  and for all $x\in X$,
    \[
    d^{\circ}(T_{ab}(x), T'_{a' b'}(x))\leq L(\delta(a,a') + \delta(b,b')) + \epsilon
    \]
    Then, for all compact subsets $\A,\A'\subset E$
      and $\B,\B'\subset E$,
      \[
| \rho(T,\A,\B)-\rho(T',\A',\B')|
 \leq  L (\delta_H(\A,\A')+ \delta_H(\B,\B')) +\epsilon
 \]
\end{Theorem}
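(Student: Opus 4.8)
The plan is to compare the two games through their Shapley operators, using the variational characterization $\rho(T,\A,\B) = \max_{v\in\Lip_1}\inf_{x\in X}(Sv(x) - v(x))$ from \Cref{main}. The key observation is that the hypothesis gives a uniform bound on how far the operators $T_{ab}$ and $T'_{a'b'}$ move any point, measured in the symmetrized metric $d^\circ$, in terms of the distances between the actions plus $\epsilon$. Since every $v\in\Lip_1$ is nonexpansive for $d$, and hence also satisfies $|v(y)-v(z)|\leq d^\circ(y,z)$, we can transport this into a bound $|v(T_{ab}(x)) - v(T'_{a'b'}(x))| \leq L(\delta(a,a')+\delta(b,b'))+\epsilon$ valid for all $x\in X$ and all $v\in\Lip_1$.

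First I would fix $v\in\Lip_1$ and analyze $Sv(x) - S'v(x) = \inf_a\sup_b v(T_{ab}(x)) - \inf_{a'}\sup_{b'} v(T'_{a'b'}(x))$. The standard trick for such $\inf\sup$ differences: to bound $Sv(x) - S'v(x)$ from above, pick $a$ nearly optimal for the inner minimization in $S$, then for this $a$ find $a'\in\A'$ with $\delta(a,a')\leq\hausdorff(\A,\A')+\eta$; the player Max responds with some $b'$, and we pick $b\in\B$ with $\delta(b,b')\leq\hausdorff(\B,\B')+\eta$. Comparing term by term and using the transported bound yields $Sv(x)-S'v(x)\leq L(\hausdorff(\A,\A')+\hausdorff(\B,\B'))+\epsilon + (\text{terms going to }0\text{ with }\eta)$. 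The symmetric argument (swapping the roles) gives the reverse inequality, so $\|Sv - S'v\|_\infty \leq L(\hausdorff(\A,\A')+\hausdorff(\B,\B'))+\epsilon =: \Delta$, uniformly in $v\in\Lip_1$.

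Next I would convert the uniform operator bound into an eigenvalue bound. Let $\bar v$ realize (or nearly realize) the max in $\rho(T,\A,\B) = \max_v\inf_x(Sv(x)-v(x))$, so $S\bar v \geq \rho(T,\A,\B) + \bar v$ pointwise; combined with $S'\bar v \geq S\bar v - \Delta$ this gives $S'\bar v \geq \rho(T,\A,\B) - \Delta + \bar v$, hence $\inf_x(S'\bar v(x) - \bar v(x)) \geq \rho(T,\A,\B) - \Delta$, and so by the characterization applied to the primed game, $\rho(T',\A',\B') \geq \rho(T,\A,\B) - \Delta$. Swapping $T\leftrightarrow T'$ and $\A\leftrightarrow\A'$, $\B\leftrightarrow\B'$ (the hypothesis is symmetric in the two families and $\hausdorff$ is symmetric) gives the reverse inequality, yielding $|\rho(T,\A,\B)-\rho(T',\A',\B')|\leq\Delta$, which is the claim.

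The main obstacle I anticipate is ensuring that all the $\inf$/$\sup$ manipulations are legitimate: one must check that for $v\in\Lip_1$ the quantity $Sv(x)-S'v(x)$ really obeys the two-sided bound uniformly, which requires a careful $\epsilon$-net argument over $\A,\A',\B,\B'$ exploiting compactness (so that near-optimal actions exist and the Hausdorff-distance approximations are available in both directions), and attention to whether the maximum in \Cref{main} is attained or only approached — if only approached, one carries an extra $\eta$ through and lets $\eta\to 0$ at the end. A secondary point is the transition from the nonexpansiveness bound for $d$ to the estimate with $d^\circ$: since $v$ is $1$-Lipschitz for $d$ one has $v(y)-v(z)\leq d(y,z)\leq d^\circ(y,z)$ and likewise $v(z)-v(y)\leq d^\circ(y,z)$, so $|v(y)-v(z)|\leq d^\circ(y,z)$, which is exactly what lets the hypothesis (stated for $d^\circ$) feed into the argument cleanly.
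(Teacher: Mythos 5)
Your proposal is correct and takes essentially the same route as the paper: both use the super-eigenvector characterization of \Cref{main} (a $v\in\Lip_1$ with $Sv\geq\rho(T,\A,\B)+v$), compare $Sv$ and $S'v$ via the $1$-Lipschitz property of $v$ together with the Hausdorff-distance approximation of actions, and conclude by symmetry. The only slip is directional: the selection scheme you describe (near-optimal $a$ for $S$, then $a'$ close to $a$, Max replies $b'$, then $b$ close to $b'$) actually yields $S'v(x)-Sv(x)\leq\Delta$ rather than $Sv(x)-S'v(x)\leq\Delta$, but since the hypothesis and the Hausdorff distance are symmetric this is harmless and the two-sided bound follows as you say.
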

\begin{proof}
  We denote by $S$ (resp.\ $S'$) the Shapley operator associated to
  $(T,\A,\B)$
  (resp.\ $(T',\A',\B')$).
  By~\Cref{main},
  there exists $v \in \Lip_1$ such that $S v \geq \rho(T,\mathcal{A},\mathcal{B}) + v$. Because $v$ is $1$-Lipschitz, for all $a,a'\in E$ and $b,b'\in E$, for all $x\in X$,
 we have 
\[
v(T_{ab}(x)) \leq v(T'_{a' b'}(x)) + d(T_{ab}(x), T'_{a' b'}(x)) \enspace .
\]
So for all $(a,b,a') \in \mathcal{A}\times \mathcal{B}\times \mathcal{A}'$ we have 
\[
v(T_{ab}(x)) \leq \inf_{b' \in \mathcal{B}'} \left[v(T'_{a' b'}(x)) + d(T_{ab}(x), T'_{a' b'}(x))\right]
\]
Therefore, for all $a' \in \mathcal{A}'$
\begin{align*}
    &Sv(x)\\
    &\leq \inf_{a \in \mathcal{A}}\sup_{b\in \mathcal{B}}\big[\inf_{b' \in \mathcal{B}'} v(T'_{a' b'}(x)) + d(T_{ab}(x), T'_{a' b'}(x))\big]\\
    &\leq \inf_{a \in \mathcal{A}}\sup_{b\in \mathcal{B}}\big[\sup_{b' \in \mathcal{B}'} v(T'_{a' b'}(x)) + \inf_{b' \in \mathcal{B}'}d(T_{ab}(x), T'_{a' b'}(x))\big]\\
    &= \sup_{b' \in \mathcal{B}'} v(T'_{a' b'}(x)) + \inf_{a \in \mathcal{A}}\sup_{b\in \mathcal{B}} \inf_{b' \in \mathcal{B}'}d(T_{ab}(x), T'_{a' b'}(x))
\end{align*}
By assumption, for all $a' \in \mathcal{A}'$
\begin{align*}
    &\inf_{a \in \mathcal{A}}\sup_{b\in \mathcal{B}} \inf_{b' \in \mathcal{B}'}d(T_{ab}(x), T'_{a' b'}(x)) \\
  &\leq \inf_{a \in \mathcal{A}}\sup_{b\in \mathcal{B}} \inf_{b' \in \mathcal{B}'}
  \big( L\delta(a,a')+L\delta(b,b') + \epsilon \big) ,
  \text{ and so}
\end{align*}
\begin{align*}
Sv(x)&
\leq \inf_{a' \in \mathcal{A}'}\big[\sup_{b' \in \mathcal{B}'} v(T'_{a'b'}(x)) + \\
  & \qquad \qquad
  \inf_{a \in \mathcal{A}}\sup_{b\in \mathcal{B}} \inf_{b' \in \mathcal{B}'}(L\delta(a,a')+L\delta(b,b'))\big] + \epsilon \\
&
\!\!\!\!\!\!\!\!\!\!\!\leq \inf_{a' \in \mathcal{A}'}\sup_{b' \in \mathcal{B}'} v(T'_{a' b'}(x)) + \\
  & \qquad 
\sup_{a' \in \mathcal{A}'}\inf_{a \in \mathcal{A}}\sup_{b\in \mathcal{B}} \inf_{b' \in \mathcal{B}'}(L\delta(a,a')+L\delta(b,b'))+\epsilon \\
    &
\!\!\!\!\!\!\!\!\!\!\!= S' v(x) + L(\sup_{a' \in \mathcal{A}'}\inf_{a \in \mathcal{A}}\delta(a,a') + \sup_{b \in \mathcal{B}}\inf_{b' \in \mathcal{B}'}\delta(b,b')) + \epsilon\\
    &
\!\!\!\!\!\!\!\!\!\!\!\leq S' v(x) + L(\hausdorff(\mathcal{A},\mathcal{A}')+\hausdorff(\mathcal{B},\mathcal{B}')) + \epsilon
\enspace .
\end{align*}
Since this holds for all $x$, we have
\[
    \rho(T,\mathcal{A},\mathcal{B}) - L(\hausdorff(\mathcal{A},\mathcal{A}')+\hausdorff(\mathcal{B},\mathcal{B}')) - \epsilon + v \leq S' v \enspace .
\]
By~\Cref{main} (second equation), this implies that 
\[
    \rho(T,\mathcal{A},\mathcal{B}) - L(\hausdorff(\mathcal{A},\mathcal{A}')+\hausdorff(\mathcal{B},\mathcal{B}')) - \epsilon \leq \rho(T',\mathcal{A}', \mathcal{B}')
    \]
    By symmetry, we get the final inequality of the theorem.
\end{proof}
 
We now consider the following special case.
We denote by $C$ a closed convex pointed %reproducing
cone in $\R^n$, and by $\End(\Int C)$ the set
of linear operators preserving the interior of $C$.
The set $\End(\Int C)$ is equipped with the partial order $\leq$
such that for $A,A'\in\End(\Int C)$,  we have
$A\leq A'$ if $xA\leq_{C} xA'$ for all $x\in \Int C$
(the linear operators $A,A'$ act at right on the row vector $x$ by matrix
multiplication).
In this way, the notion of {\em part}
carries over to $\End(\Int C)$.
The Funk and Thompson metrics are well-defined on every part and still denoted by $\Funk(\cdot,\cdot)$ and $d_T(\cdot,\cdot)$.
For $(A,B)\in \End(\Int C)\times \End(\Int C)$, and $x\in \Int C$,
we set $T_{AB}(x) = xAB$.
For all $\A,\B\subset\End(\Int C)$,
we denote $\rho(\A,\B)\coloneqq \rho(T,\A,\B)$.
\begin{corollary}[Continuity of the competitive spectral radius, case of cones]\label{cor-continuity}
  The map which sends a pair $(\A,\B)$ of compact sets included in a part
$\mathscr{P}$  of $\End(\Int C)$,
  equipped with the Hausdorff distance induced by the Thompson metric,
  to $\rho(\A,\B)$ is $1$-Lipschitz.
\end{corollary}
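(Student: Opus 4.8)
The plan is to deduce the statement directly from \Cref{th-continuity}, applied with $T=T'$ equal to the family $T_{AB}(x)=xAB$, with $(E,\delta)$ taken to be the part $\mathscr{P}$ of $\End(\Int C)$ equipped with the Thompson metric $d_T$, and with the hemi-metric $d$ on $X=\Int C$ taken to be the Funk metric, so that $d^{\circ}$ is exactly the Thompson metric on $\Int C$. With these choices it suffices to verify the hypothesis of \Cref{th-continuity} with $L=1$ and $\epsilon=0$, namely the single estimate
\[
d^{\circ}\bigl(T_{AB}(x),T_{A'B'}(x)\bigr)\;\le\; d_T(A,A')+d_T(B,B')
\]
for all $A,A',B,B'\in\mathscr{P}$ and all $x\in\Int C$; the conclusion of \Cref{th-continuity} is then precisely the claimed $1$-Lipschitz continuity with respect to the Hausdorff distances induced by $d_T$.

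Before doing this I would record that \Cref{assump-1} holds in this setting, so that \Cref{main}, on which \Cref{th-continuity} rests, is available: the maps $(A,B,x)\mapsto xAB$ are continuous for the Euclidean topology, the Thompson topology on a part of $\Int C$ coincides with the Euclidean one by \cite{nussbaumlemmens}, and for compact $K\subset\Int C$ the set $\{xAB\mid (A,B,x)\in\A\times\B\times K\}$ is the continuous image of a compact set, hence compact, and lies in $\Int C$ since the operators preserve $\Int C$. I would also note that $\Int C$ is a single part of $C$, and that since $A,A'$ (resp.\ $B,B'$) lie in the common part $\mathscr{P}$, the points $xA$ and $xA'$ are comparable, hence so are $xAB,xA'B$ and $xA'B,xA'B'$, hence $xAB$ and $xA'B'$ lie in the same part $\Int C$; this makes all the Funk distances below, and their symmetrizations, finite.

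For the estimate itself I would insert the intermediate point $xA'B$ and use the triangular inequality of the Funk hemi-metric:
\[
\Funk\bigl(xAB,xA'B'\bigr)\;\le\;\Funk\bigl(xAB,xA'B\bigr)+\Funk\bigl(xA'B,xA'B'\bigr).
\]
For the first term, if $\lambda>0$ satisfies $A\le\lambda A'$, then $xA\le_C\lambda\,xA'$, and applying the linear map $B$ (which, preserving $\Int C$, preserves the closed cone $C$ by continuity, hence preserves $\le_C$) gives $xAB\le_C\lambda\,xA'B$; taking the infimum over such $\lambda$ yields $\Funk(xAB,xA'B)\le\Funk(A,A')$. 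For the second term, if $\mu>0$ satisfies $B\le\mu B'$, then $(xA')B\le_C\mu\,(xA')B'$, so $\Funk(xA'B,xA'B')\le\Funk(B,B')$. Hence $\Funk(xAB,xA'B')\le\Funk(A,A')+\Funk(B,B')$, and by symmetry $\Funk(xA'B',xAB)\le\Funk(A',A)+\Funk(B',B)$. Taking the maximum of the two and using $\max(p_1+q_1,p_2+q_2)\le\max(p_1,p_2)+\max(q_1,q_2)$ gives the desired inequality for $d^{\circ}$ and $d_T$.

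The main obstacle is not a deep point but the bookkeeping required to stay inside one part of the cone throughout, so that all Funk and Thompson distances are finite and the monotonicity manipulations are legitimate; in particular one must argue carefully that a linear operator preserving $\Int C$ preserves the order $\le_C$, and that the notions of ``part'' and the estimate transfer correctly between $\End(\Int C)$ and $\Int C$. Once the displayed estimate is established, the corollary follows at once from \Cref{th-continuity}.
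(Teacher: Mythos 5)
Your proposal is correct and follows essentially the same route as the paper: verify the hypothesis of \Cref{th-continuity} with $L=1$, $\epsilon=0$ by inserting the intermediate point $xA'B$, bounding $\Funk(xAB,xA'B)\leq\Funk(A,A')$ and $\Funk(xA'B,xA'B')\leq\Funk(B,B')$ via order-preservation of the linear maps, and then symmetrizing. The extra bookkeeping you include (checking \Cref{assump-1} and that all quantities stay within a single part so the distances are finite) is left implicit in the paper but is consistent with its argument.
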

\begin{proof}
  For $A,A' \in \End(\Int C)$, we have
  \(
  \Funk(A,A')= \log \inf \{\lambda > 0 \mid xA \leq \lambda x A', \forall x \in \Int C \}\).
  So, for all $x$ in $\Int C$, $\Funk(xA,xA') \leq \Funk(A, A')$. Furthermore, for all $B,B' \in \End(\Int C)$ as $(\Int C)A' \subset \Int C$, $\Funk(xA'B, xA'B') \leq \Funk(B,B')$.
  So, for all $x$ in $\Int C$,
\(%  \begin{align*}
\Funk(xAB,xA'B')
%% &%\leq \Funk(AB,A'B') \\
%% %  &
\leq\Funk(xAB, xA'B) +
%% \\
%% & \qquad\qquad
\Funk(xA'B,xA'B')%\\
%  &
\leq\Funk(xA,xA') + \Funk(B,B')%\\
%&
\leq\Funk(A,A') + \Funk(B,B')%\\
%  &
\leq d_T (A,A') + d_T(B,B')\).
%% \enspace .
%%   \end{align*}
  By symmetry we get that 
  \(
  d_T(xAB,xA'B') \leq d_T(A,A') + d_T(B,B')\).
  %% \enspace 
  %% \]
  So the assumption of \cref{th-continuity} is verified with $\epsilon=0$ and $L=1$, and we get that $(\A,\B) \to \rho(\A,\B)$ is 1-Lipschitz with respect to the Hausdorff distance induced by the Thompson metric on the space of
  compact subsets of $\mathscr{P}\times \mathscr{P}$.
\end{proof}
\begin{example}\label{ex-support}
  Each subset $U\subset \{0,1\}^{n\times n}$
  such that for all $i\in [n]$, there exists $j\in [n]$ such that $(i,j)\in U$
  defines the set of matrices
  \(
  \mathscr{P}_U = \{ A\in \R_{\geq 0}^{n\times n}\mid A_{ij}>0 \iff (i,j) \in U\}\).
  We identify a matrix $A\in \mathscr{P}_U$ to the operator $x\mapsto xA$.
It is immediate that any set $\mathscr{P}_U$ is a part
  of $\End(\Int \R_{\geq 0}^n)$. 
Moreover, every part is of this form.
  For instance, taking $C=\R_{\geq}^2$, 
  $A=(1,1;1,1)$ and $A'=(1,1;0,1)$, we see that the linear maps $x\mapsto xA$ and $x\mapsto xA'$ belong
  to different parts of $\End (\Int \R_{\geq}^2)$.
\end{example}
\begin{remark}
  Let $C=\R_{\geq}^n$, let $\varnothing \neq U\subset [n]\times [n]$.
  One can show
  that on the set of compact subsets of $\mathscr{P}_U$, the Hausdorff
  distances defined by the Thompson metric and by the Euclidean metric induce
  the same topology. (The proof builds on principles similar to
  the ones of \cite{nussbaumlemmens} corollary 2.5.6). Therefore we also retrieve the continuity of the lower spectral radius for positive matrices, proven in~\cite{Jungers2012}. 
\end{remark}
\section{Approximating the competitive spectral radius}\label{sec-IV}

Let $C \subset \R^n$ be a pointed, convex and closed cone,
with dual cone $C^*$.
We choose $e^* \in \operatorname{int}C^*$ and denote by $\Delta=\{x\in C\mid e^*(x)=1\}$ the associated ``simplex''. We shall make the following assumption.
\begin{hypo}[Small cone]\label{small-cone}
There exists a closed cone 
$K \subset C$ such that $T_{ab}(K)\subset K$ for all $a\in \A,\; b\in \B$,
and $X\coloneqq K\cap \Delta \subset \operatorname{relint}\Delta$.
\end{hypo}
Since $\Delta$ is compact in the norm topology of $\R^n$,
so is $X$. Moreover,
on $\relint \Delta$, the norm topology coincides with the topology
induced by the Hilbert metric, so $X$ is compact as well in this
topology.
Hence, for all $h>0$, we can ``discretize'' $X$ by choosing
a finite subset $X_h\subset X$, of cardinality
$N_h$, such that
\(
X\subset \cup_{x\in X_h} B_H(x, h)%\enspace ,
\),
where $B_H(x, h)$ is the open ball of center $x$ and radius $h$
in Hilbert metric.

Our goal is to compute the competitive spectral radius $\rho$ of the family
of operators $T_{ab}$. Following section~\ref{sec-II},
we will work with the following operator $F : \Lip_1(X) \to \Lip_1(X)$
\[
Fv(x) = \inf_a \sup_b \log(\langle T_{ab}(x), e^* \rangle) + v\Big(\frac{T_{ab}(x)}{\langle T_{ab}(x),e^* \rangle}\Big)\enspace .
\]

We define the {\em interpolation operator} $I^+_h: \R^{X_h} \to \R^X$ as follows
\[
I^+_h v(x) \coloneqq \min_{y \in X_h}[v(y) + \Funk(x,y)]
\]
and the restriction operator $R_h: \R^X \to \R^{X_h}$,
such that $R_h v(x)=v(x)$ for all $x\in X_h$.
The introduction of $I^+_h$ is motivated by the following property.
\begin{prop}[$\Lip_1$ extension]\label{prop-ext}
  For all $v\in \R^{X_h}$, we have $I^+_hv\in \Lip_1(X)$.
  Moreover,
  if
  $v: X_h \to \R$ is in $\Lip_1(X_h)$ then $R_h(I_h^+v) = v$.
\end{prop}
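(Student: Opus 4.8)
The statement is the hemi-metric analogue of the classical McShane--Whitney extension formula, so the plan is to derive both assertions directly from the triangle inequality of the Funk hemi-metric, being careful about the order of the arguments since $\Funk$ is asymmetric. First I would record that $I^+_h v$ is real-valued: since $X_h\subset X\subset\relint\Delta$ lies in a single part of $C$, the quantity $\Funk(x,y)$ is finite for every $x\in X$ and $y\in X_h$, and $X_h$ is finite, so the minimum defining $I^+_h v(x)$ is attained and finite.

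For the first claim, fix $x,x'\in X$ and let $y^\star\in X_h$ attain the minimum in $I^+_h v(x')=v(y^\star)+\Funk(x',y^\star)$. Using $y^\star$ as a (sub-optimal) choice in the definition of $I^+_h v(x)$ and then the triangle inequality $\Funk(x,y^\star)\le\Funk(x,x')+\Funk(x',y^\star)$, I obtain
\[
I^+_h v(x)\;\le\; v(y^\star)+\Funk(x,y^\star)\;\le\;\Funk(x,x')+\bigl(v(y^\star)+\Funk(x',y^\star)\bigr)\;=\;\Funk(x,x')+I^+_h v(x')\enspace .
\]
Since $x,x'$ were arbitrary, this says exactly $I^+_h v(x)-I^+_h v(x')\le\Funk(x,x')$, i.e.\ $I^+_h v\in\Lip_1(X)$.

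For the second claim, let $v\in\Lip_1(X_h)$ and fix $x\in X_h$. Choosing $y=x$ in the minimum and using $\Funk(x,x)=0$ gives $I^+_h v(x)\le v(x)+\Funk(x,x)=v(x)$. Conversely, for every $y\in X_h$ the $1$-Lipschitz bound $v(x)-v(y)\le\Funk(x,y)$ rearranges to $v(y)+\Funk(x,y)\ge v(x)$, so the minimum over $y\in X_h$ is at least $v(x)$. Hence $I^+_h v(x)=v(x)$ for all $x\in X_h$, that is $R_h(I^+_h v)=v$.

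I do not anticipate a genuine obstacle here: each half is a two-line consequence of the triangle inequality and the definition of $\Lip_1$. The only point requiring minimal care is keeping the asymmetry of $\Funk$ straight --- in particular that the extension formula uses $\Funk(x,y)$ with the running point $x$ in the first slot, which is precisely what makes the one-sided Lipschitz estimate go through --- together with the preliminary remark that $\Funk$ is finite on $X\times X_h$, so that $I^+_h v$ is a well-defined real-valued function.
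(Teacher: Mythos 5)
Your proof is correct and follows essentially the same route as the paper's: the first part is the standard ``minimum of $1$-Lipschitz functions'' argument (which you unfold by passing through the minimizer $y^\star$ and the triangle inequality), and the second part is exactly the paper's two-sided comparison, taking $y=x$ for one inequality and the $1$-Lipschitz bound $v(x)-v(y)\le\Funk(x,y)$ for the other. The added remark on finiteness of $\Funk$ on $X\times X_h$ is a harmless bonus.
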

\begin{proof}
  Because of the triangular inequality, for all $x_i \in X_h$, $x \mapsto v(x_i) + \Funk(x,x_i)$ is 1-Lipschitz so $I_h^+ v \in \Lip_1$.
  Moreover, if $v\in\Lip_1(X_h)$, for all $x_i,x_j \in X_h$ , we have
  $v(x_i) \leq v(x_j) + \Funk(x_i,x_j)$. Taking the infimum
  over $x_j\in X_j$, we deduce that $v(x_i)\leq I_h^+ v(x_i)$.
Moreover, considering $x_j=x_i$, we get $I_h^+ v(x_i)\leq v(x_i)$.
\end{proof}
\begin{remark}
Alternatively, we may consider the dual interpolation operator
  \(
  I^-_h v(x) \coloneqq \max_{y \in X_h}[v(y) - \Funk(y,x)]\).
Proposition~\ref{prop-ext}, and the results which follow, admit dual versions, working with $I^-_h$. 
The operators $I^\pm_h$ are examples of McShane-Whitney extensions~\cite{McShane1934}, \cite{Whitney1934}.
  \end{remark}

Define the Shapley operators:
\[
F_h^+ = F I_h^+ R_h : \Lip_1(X)\to\Lip_1(X)
\]
and
\[
\hat{F}_h^+ = R_h F I_h^+: \Lip_1(X_h) \to \Lip_1(X_h)\enspace .
\]
We shall use the following general property. Here,
$E$ is an arbitrary hemi-metric space. 
\begin{Proposition}
  Suppose that $G$ is a Shapley operator sending $\Lip_1(E)$ to itself.
  Then, the additive eigenvalue $\lambda$ of $S$ associated to a  distance-like eigenvector (when it exists) satisfies
  \[
  \lambda =\specrad(G)\coloneqq \lim_{k \to \infty}\frac{[G^k d(\cdot,x_0)](x_0)}{k} \enspace.
  \]
  In particular, $\lambda$ is unique.
\end{Proposition}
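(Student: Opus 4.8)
The plan is to show $\lambda=\specrad(G)$ by sandwiching the limit $\specrad(G)$ between $\lambda$ and $\lambda$, using only that $G$ is monotone, additively homogeneous and maps $\Lip_1(E)$ to itself, together with the two-sided comparison of a distance-like eigenvector with the function $d(\cdot,x_0)$.

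First I would check that $\specrad(G)$ is well defined. The function $d(\cdot,x_0)$ lies in $\Lip_1(E)$ by the triangle inequality, so $G^k d(\cdot,x_0)$ is defined and lies in $\Lip_1(E)$ for every $k$; writing $s_k\coloneqq[G^k d(\cdot,x_0)](x_0)$, the membership $G^\ell d(\cdot,x_0)\in\Lip_1(E)$ gives $G^\ell d(\cdot,x_0)\le s_\ell+d(\cdot,x_0)$ pointwise, and applying the monotone additively homogeneous map $G^k$ and evaluating at $x_0$ yields $s_{k+\ell}\le s_k+s_\ell$; Fekete's lemma then provides $\specrad(G)=\lim_k s_k/k=\inf_k s_k/k$. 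A similar triangle-inequality estimate shows that $|s_k(x_0)-s_k(x_0')|$ is bounded uniformly in $k$, so $\specrad(G)$ does not depend on the choice of $x_0$; hence I may take $x_0$ to be a base point witnessing that the eigenvector is distance-like.

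Now let $v$ be a distance-like additive eigenvector with eigenvalue $\lambda$: thus $Gv=\lambda+v$, and $v(x)\ge\alpha+d(x,x_0)$ for some $\alpha\in\R$. Since $v\in\Lip_1(E)$ we also have $v(x)-v(x_0)\le d(x,x_0)$, so
\[
d(\cdot,x_0)+\alpha\ \le\ v\ \le\ d(\cdot,x_0)+v(x_0)\enspace.
\]
Additive homogeneity gives $G^k v=k\lambda+v$ by induction. Applying $G^k$ to the left-hand inequality (monotonicity and additive homogeneity) and evaluating at $x_0$ gives $s_k\le k\lambda+v(x_0)-\alpha$, hence $\specrad(G)\le\lambda$ after dividing by $k$ and letting $k\to\infty$; applying $G^k$ to the right-hand inequality and evaluating at $x_0$ gives $k\lambda\le s_k$, hence $\lambda\le\specrad(G)$. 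Therefore $\lambda=\specrad(G)$. Since the right-hand side does not involve $v$, any two distance-like eigenvectors must share this eigenvalue, which is the asserted uniqueness.

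The argument is essentially routine. The only places needing a little care are the well-definedness of $\specrad(G)$ (subadditivity plus Fekete, and base-point independence) and keeping the two Lipschitz inequalities straight — the distance-like lower bound $d(\cdot,x_0)\le v-\alpha$ versus the nonexpansiveness upper bound $v-v(x_0)\le d(\cdot,x_0)$ — since these pull in opposite directions and each is used for exactly one of the two inequalities.
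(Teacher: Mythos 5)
Your proof is correct and follows essentially the same route as the paper: the two-sided comparison $\alpha + d(\cdot,x_0)\le v\le v(x_0)+d(\cdot,x_0)$ (distance-likeness on one side, nonexpansiveness of $v$ on the other), propagated by monotonicity and additive homogeneity of $G^k$ and evaluated at $x_0$, is exactly the paper's sandwich argument. The extra checks you include (subadditivity of $s_k$ via Fekete and base-point independence) are sound and merely make explicit what the paper leaves implicit.
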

\begin{proof}
  Let $(\lambda,v) \in \R \times \Lip_1(E)$ such that $Gv = \lambda +v$ where $v$ is distance-like. Then, for all $k\in\N$, $G^kv = k\lambda +v$, and so $\lambda = \lim_{k \to \infty}\frac{G^k v(x_0)}{k}$.
  Moreover, there is a constant $\alpha$ such that $\alpha + d(\cdot,x_0)\leq v$, and so
\(
\alpha + G^kd(\cdot,x_0) \leq G^kv \leq v(x_0) + G^kd(\cdot,x_0) 
\).
So $\lambda = \lim_{k \to \infty}\frac{G^k v(x_0)}{k} = \lim_{k \to \infty}\frac{[G^k d(\cdot,x_0)](x_0)}{k}=\specrad(G)$.
\end{proof}

\begin{theorem}\label{th-fp}
  Let $(E,d)$ denote a hemi-metric space, and let $G$ be a Shapley operator
  preserving $\Lip_1(E)$ and continuous for the topology of uniform
  convergence on compact sets.
  Then, there exists a vector $v\in \Lip_1$ and
  a scalar $\lambda\in \R$ such that $G(v)=\lambda +v$. 
\end{theorem}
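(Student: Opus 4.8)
The plan is to find the additive eigenpair as a fixed point of a suitable renormalization of $G$, using a compactness argument. First I would normalize: since $G$ is additively homogeneous, $G(\lambda e + v) = \lambda e + Gv$, so the quantity $Gv(x_0) - v(x_0)$ shifts in a controlled way, and I may freely work on the quotient $\Lip_1(E)/\R e$ (identifying functions that differ by a constant). On this quotient I would consider the map $\Phi(v) = Gv - (Gv(x_0))\, e$, which sends the quotient space to the slice $\{w \in \Lip_1(E) : w(x_0) = 0\}$; a fixed point $v$ of $\Phi$ yields exactly $Gv = \lambda + v$ with $\lambda = Gv(x_0)$.

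Next I would establish the compactness needed for a fixed-point theorem. The slice $\{w \in \Lip_1(E) : w(x_0) = 0\}$ consists of $1$-Lipschitz functions vanishing at a fixed point, hence is equi-Lipschitz and pointwise bounded on every part of $E$ (by $|w(x)| \le d^{\circ}(x,x_0)$ on the part of $x_0$), so by Arzelà–Ascoli it is compact for the topology of uniform convergence on compact sets. If $E$ itself is compact (which is the case of interest, $E = X = K \cap \Delta$), this slice is a compact convex subset of $\mathscr{C}(E)$. I would then invoke a Schauder–Tychonoff fixed point theorem: $\Phi$ is continuous by hypothesis (continuity of $G$ for uniform convergence on compacts, composed with the continuous evaluation $w \mapsto w(x_0)$) and maps this compact convex set into itself, hence has a fixed point. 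To handle a possibly non-compact $E$ one would exhaust $E$ by a compact part containing $x_0$, or more simply invoke the existing structure: \Cref{main} already produces $v \in \Lip_1$ with $\lambda + v \le Sv$ achieving the value, and one can upgrade the sub-eigenvector to an eigenvector by the same renormalization/compactness scheme applied to the orbit $(\Phi^k v)_k$, extracting a uniformly convergent subsequence whose limit is a genuine fixed point.

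The main obstacle I expect is verifying that the relevant set of normalized functions is genuinely compact in the right topology and that $\Phi$ stays within it — i.e. that iterating $G$ does not let the functions run off to infinity on $E$. This is where the hypothesis that $G$ preserves $\Lip_1(E)$ is essential: it forces the normalized iterates $\Phi^k v$ to remain $1$-Lipschitz and vanish at $x_0$, pinning them in the equi-Lipschitz, pointwise-bounded family, so no mass escapes. The continuity of $G$ for uniform convergence on compacts then guarantees the limit of a convergent subsequence is a fixed point of $\Phi$. Once the fixed point $v$ exists, $Gv = \lambda + v$ holds with $\lambda = Gv(x_0)$, and by the preceding Proposition this $\lambda$ equals $\specrad(G)$ and is unique, completing the argument.
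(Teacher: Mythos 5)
Your argument is correct and is essentially the paper's own proof: the paper simply states that the result ``follows from the Schauder--Tychonoff fixed-point theorem,'' and your normalization $\Phi(v)=Gv-Gv(x_0)$ acting on the compact convex slice $\{w\in\Lip_1(E):w(x_0)=0\}$ (equi-Lipschitz, pointwise bounded by $d^{\circ}(\cdot,x_0)$, hence compact by Arzel\`a--Ascoli) is exactly the standard way to instantiate that theorem here. The only cosmetic remark is that the ``part'' language and the exhaustion of a non-compact $E$ are unnecessary, since the slice is already compact for uniform convergence on compact sets whenever $(E,d^{\circ})$ is a metric space.
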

This follows from the Schauder-Tychonoff's fixed-point theorem.
The equation $G(v)=\lambda +v$ is an instance
of the ``ergodic eigenproblem'' which has a long history in the
ergodic optimization litterature~\cite[\S~2.5]{maslovkolokoltsov95}, \cite{Savchenko1999,Bousch2001,Jenkinson2018}. \todo{SG: I wrote here that this is a more or less standard result (rather than in intro)}
\begin{corollary}\label{cor-S}
  Each of the operators $F$,  $F_h^+$ and $\hat{F}_h^+$ admits a distance-like
  eigenvector.
\end{corollary}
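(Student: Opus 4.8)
The plan is to verify that each of the three operators satisfies the hypotheses of \Cref{th-fp} --- that it is a Shapley operator, that it preserves the relevant space of $1$-Lipschitz functions, and that it is continuous for uniform convergence on compact sets --- then to apply \Cref{th-fp} to obtain an additive eigenpair $(\lambda,v)$, and finally to observe that, since the underlying hemi-metric space is compact, any such eigenvector $v$ is automatically distance-like.

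First I would treat $F$. By \Cref{small-cone}, for every $x\in X$ and $(a,b)\in\A\times\B$ one has $T_{ab}(x)\in K$, hence $T_{ab}(x)/\langle T_{ab}(x),e^*\rangle\in K\cap\Delta=X$; combined with the continuity and compactness supplied by \Cref{assump-1}, this shows $Fv$ is well defined, and the argument already used in \Cref{sec-II} to show that $\shapc$ preserves $\CLip(\Delta)$ shows that $F$ maps $\Lip_1(X)$ into itself. It is monotone and additively homogeneous by construction, hence a Shapley operator. For $F_h^+=F I_h^+ R_h$ and $\hat{F}_h^+=R_h F I_h^+$ I would note that the restriction $R_h$ and the McShane-Whitney extension $I_h^+$ are each monotone and additively homogeneous --- the latter because $I_h^+(\lambda e+v)=\lambda e+I_h^+v$ --- so that these composites are again Shapley operators; moreover \Cref{prop-ext}, together with the fact that a restriction of a $1$-Lipschitz function is $1$-Lipschitz, shows that $F_h^+$ preserves $\Lip_1(X)$ and that $\hat{F}_h^+$ preserves $\Lip_1(X_h)$. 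For the continuity requirement I would invoke the standard fact that a monotone additively homogeneous self-map of a space of bounded real functions is nonexpansive for the supremum norm; since $X$ is compact in the Hilbert topology and $X_h$ is finite, the functions at hand are bounded and uniform convergence on compacts coincides with uniform convergence, so all three operators are continuous in the sense required by \Cref{th-fp}.

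Applying \Cref{th-fp} then yields, for each $G\in\{F,F_h^+,\hat{F}_h^+\}$, a scalar $\lambda$ and a function $v$ in the appropriate $\Lip_1$ space with $Gv=\lambda+v$. To see that this $v$ is distance-like, note that $v$ is $1$-Lipschitz for the Funk metric, hence $1$-Lipschitz --- and in particular continuous --- for the symmetrized (Thompson) metric, so $v$ is bounded below on the compact set $X$ (resp.\ $X_h$), say $v\geq m$; and for any fixed base point $x_0$ in that set, $\Funk(\cdot,x_0)$ is continuous there, since $X$ lies in a single part of $C$ on which the Funk metric is continuous, hence $\Funk(\cdot,x_0)\leq M$ for some constant $M$. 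Then $v(x)\geq m\geq(m-M)+\Funk(x,x_0)$ for all $x$, so $v$ is distance-like with constant $m-M$, which is precisely the claim.

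I do not expect a genuine obstacle: \Cref{th-fp}, \Cref{prop-ext} and the preservation of $\CLip(\Delta)$ by $\shapc$ carry the weight. The only points that need care are the bookkeeping that the composites $F_h^+$ and $\hat{F}_h^+$ really land in the intended Lipschitz spaces and are Shapley operators, and the (routine) verification, resting on compactness of $X$ under \Cref{small-cone}, that $\Funk(\cdot,x_0)$ is bounded on the cross-section --- this is what promotes the bare eigenvector given by \Cref{th-fp} to a distance-like one.
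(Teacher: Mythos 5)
Your proposal is correct and follows essentially the same route as the paper: verify that each operator is a Shapley operator preserving the relevant $\Lip_1$ space, use compactness of $X$ (resp.\ finiteness of $X_h$) to get sup-norm nonexpansiveness and hence the continuity hypothesis of \Cref{th-fp}, apply \Cref{th-fp}, and conclude that the resulting eigenvector is distance-like because it is bounded on a compact set. The paper's proof is just a terser version of this (it summarizes your final step as ``the function $v$ which is bounded on $X$ is trivially distance-like''), so no further comment is needed.
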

\begin{proof}
  The Shapley operator $F$ preserves $\Lip_1(X)$.
  Since $X$ is compact, we can define the sup-norm
  of every function in $\Lip_1(X)$. The Shapley operator
  $F$ is non-expansive in this sup-norm.
  Hence, $F$ satisfies the continuity assumption of~\Cref{th-fp}.
  It follows that there exists $v\in \Lip_1(X)$ and $\lambda\in \R$
  such that $Fv=\lambda +v$. Since $X$ is compact,
  the function $v$ which is bounded on $X$ is trivially distance-like.
  The same argument applies to $F_h^+$ and to $\hat{F}_h^+$.
  \end{proof}
This allows us to define the numbers
$\specrad(F)$, $\specrad(F_h^+)$,
and $\specrad (\hat{F}_h^+)$. The non-linear
eigenproblem for the operator $F$ is a two-player
version of the cohomological equation arising in the study
of dynamical systems~\cite{Livic1972}.

\begin{proposition}
  We have 
      $\specrad(\hat{F}_h^+)=\specrad(F_h^+)$.
\end{proposition}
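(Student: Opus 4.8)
The plan is to show that $\hat F_h^+$ and $F_h^+$ have the same spectral radius by exploiting the fact that they differ only by a conjugation-like relationship through the operators $R_h$ and $I_h^+$. The key algebraic observation is that $\hat F_h^+ = R_h F I_h^+$ while $F_h^+ = F I_h^+ R_h$, so that $\hat F_h^+ = R_h (F_h^+) I_h^+$ in the sense that iterating one recovers the other after pre/post-composition. More precisely, I would first establish the semigroup identities $(\hat F_h^+)^k = R_h (F I_h^+ R_h)^{k-1} F I_h^+ = R_h (F_h^+)^{k-1} F I_h^+$ and $(F_h^+)^k = F I_h^+ R_h (F_h^+)^{k-1}$; the crucial point making this work is that $R_h I_h^+$ acts as the identity on $\Lip_1(X_h)$ by \Cref{prop-ext}, so the two operators genuinely "see" the same dynamics after one application.

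The main step is then to control the spectral radii via the defining limits. Recall $\specrad(\hat F_h^+) = \lim_k \frac{1}{k}[(\hat F_h^+)^k d(\cdot, x_0)](x_0)$ and similarly for $F_h^+$, where $x_0 \in X_h$ can be chosen common to both. Using the identities above together with the fact that both $F$, $I_h^+$, $R_h$ are monotone and additively homogeneous (Shapley-type), I would sandwich $[(\hat F_h^+)^k d(\cdot,x_0)](x_0)$ between $[(F_h^+)^{k} d(\cdot,x_0)](x_0)$ shifted by bounded error terms: concretely, since $I_h^+ R_h d(\cdot,x_0) \le d(\cdot, x_0) + c$ and $\ge d(\cdot,x_0) - c'$ for constants $c, c'$ depending only on $h$ and the diameter of $X$ (as $I_h^+ R_h$ is within a bounded sup-norm distance of the identity on $\Lip_1(X)$, using that $X$ is covered by Hilbert balls of radius $h$), the difference between the two $k$-fold compositions applied to $d(\cdot,x_0)$ stays uniformly bounded in $k$. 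Dividing by $k$ and letting $k\to\infty$ kills the bounded discrepancy, giving $\specrad(\hat F_h^+) = \specrad(F_h^+)$.

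Alternatively — and this is probably the cleaner route I would actually write — I would argue directly at the level of eigenvectors. By \Cref{cor-S}, $F_h^+$ has a distance-like eigenvector $v \in \Lip_1(X)$ with $F_h^+ v = \lambda + v$ where $\lambda = \specrad(F_h^+)$. Since $F_h^+ = F I_h^+ R_h$, the function $v$ depends on the argument only through $R_h v$, i.e. $v = F(I_h^+ R_h v) - \lambda$; moreover $I_h^+ R_h v \in \Lip_1(X)$ is itself a fixed point of the eigenproblem after we check $F_h^+(I_h^+ R_h v) = F I_h^+ R_h I_h^+ R_h v = F I_h^+ R_h v = \lambda + v$, wait — more carefully, set $w = R_h v \in \Lip_1(X_h)$; then $\hat F_h^+ w = R_h F I_h^+ w = R_h F I_h^+ R_h v = R_h(F_h^+ v) = R_h(\lambda + v) = \lambda + R_h v = \lambda + w$, so $w$ is a distance-like eigenvector of $\hat F_h^+$ with the same eigenvalue $\lambda$. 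By the uniqueness statement in the Proposition preceding \Cref{th-fp} (the additive eigenvalue associated to a distance-like eigenvector is unique and equals the spectral radius), this forces $\specrad(\hat F_h^+) = \lambda = \specrad(F_h^+)$.

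The symmetric direction — starting from an eigenvector $\hat v$ of $\hat F_h^+$ and producing one for $F_h^+$ — is handled by setting $v := I_h^+ \hat v \in \Lip_1(X)$ and checking $F_h^+ v = F I_h^+ R_h I_h^+ \hat v = F I_h^+ \hat v$ (using $R_h I_h^+ = \mathrm{id}$ on $\Lip_1(X_h)$ from \Cref{prop-ext}), which equals $\hat F_h^+$-iterate data after applying $R_h$... so in fact $R_h(F_h^+ v) = R_h F I_h^+ \hat v = \hat F_h^+ \hat v = \hat\lambda + \hat v$, and since $F_h^+ v$ lies in the range of $F I_h^+$ it is determined by its restriction, giving $F_h^+ v = \hat\lambda + v$. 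I expect the only subtle point to be the bookkeeping around "$F_h^+ v$ is determined by $R_h v$": one must verify that the range of $F I_h^+$ consists of functions of the form (something composed with $R_h$), which is immediate since $I_h^+$ only reads the finitely many values on $X_h$. With both inclusions of eigenvalues established, equality follows.
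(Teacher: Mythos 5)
Your ``cleaner route'' is precisely the paper's proof: restrict an eigenvector $v$ of $F_h^+$ via $w=R_h v$, observe $\hat F_h^+ w = R_h(F_h^+ v)=\lambda+w$, and conclude by uniqueness of the eigenvalue associated to a distance-like eigenvector. The semigroup-identity sketch and the converse direction are unnecessary extras, but the core argument is correct and identical in approach.
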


\begin{proof}
  Let $v \in \Lip_1(X), \lambda \in \R$ such that $F_h^+v = \lambda + v$,
  where $\lambda =\specrad(F_h^+)$.
  Left-composing $F_h I_h^+ R_h v = \lambda +v $ by $R_h$, and setting
  $\hat{v}\coloneqq R_h v$, we get $\hat{F}_h^+ \hat{v} = \lambda + \hat{v}$.
  By uniqueness of the eigenvalue of $\hat{F}_h^+$, we get
  $\specrad(\hat{F}_h^+)= \specrad(F_h^+)$.
\end{proof}
The above eigenvalue gives an approximation of the value of the game $\rho$, whose precision only depends on the mesh-size of the grid $X_h$. By mesh-size of the grid, we means $h$ here.

\begin{theorem}\label{th-effective}
  Let $v \in \Lip_1(X_h)$ and $\lambda \in \R$ such that $\hat{F}_h^+v = \lambda + v$ then
  \(
-h + \lambda  \leq \rho \leq \lambda\).
\end{theorem}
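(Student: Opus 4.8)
The plan is to compare the eigenvalue $\lambda$ of $\hat F_h^+$ with $\rho = \specrad(F)$ by relating the operators $F$, $F_h^+$, and $\hat F_h^+$ via the interpolation/restriction operators, and then invoke the characterizations of $\rho$ from \Cref{main} and \Cref{dlike} (applied to $F$, since $F$ is the ``normalized'' Shapley operator for the game on $X$, and by \Cref{prop-ext} its eigenvector is automatically distance-like on the compact space $X$).

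For the upper bound $\rho \le \lambda$: first I would lift $v\in\Lip_1(X_h)$ to $\bar v \coloneqq I_h^+ v \in \Lip_1(X)$ using \Cref{prop-ext}. Since $v\in\Lip_1(X_h)$ we have $R_h \bar v = v$, so $\hat F_h^+ v = R_h F I_h^+ v = R_h F \bar v$. The eigenequation $\hat F_h^+ v = \lambda + v$ then says $(F\bar v)(x) = \lambda + v(x) = \lambda + \bar v(x)$ for every $x \in X_h$. Now apply $I_h^+$ to the (pointwise) inequality $R_h(F\bar v) = \lambda + R_h \bar v$: because $I_h^+$ is monotone and additively homogeneous, and $F_h^+ \bar v = F I_h^+ R_h \bar v = F\bar v$ whenever... — actually the clean route is: $F\bar v$ is $1$-Lipschitz on $X$, so by the second inequality in \Cref{prop-ext}-style reasoning, $I_h^+ R_h (F\bar v) \ge F\bar v$ would go the wrong way; instead I use that for a $1$-Lipschitz function $w$ on $X$ one has $I_h^+ R_h w \le w$ is false and $\ge w$ is the correct direction only on $X_h$. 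The robust argument: $F_h^+ \bar v = F I_h^+ R_h \bar v = F I_h^+ v = F\bar v$, and $R_h(F_h^+\bar v) = \hat F_h^+ v = \lambda + v = \lambda + R_h \bar v$; since $F_h^+\bar v \in \Lip_1(X)$ and $I_h^+$ is the minimal $1$-Lipschitz extension of its restriction, $F_h^+ \bar v = I_h^+ R_h (F_h^+ \bar v) = I_h^+(\lambda + R_h \bar v) = \lambda + I_h^+ v = \lambda + \bar v$ only if $F_h^+\bar v$ equals $I_h^+$ of its own restriction, which holds because... this is the delicate point. Granting $F\bar v \le \lambda + \bar v$ on all of $X$ (proved by taking any $x\in X$, choosing $y\in X_h$, and using $(F\bar v)(x) \le (F\bar v)(y) + \Funk(x,y)$ — wait, $F\bar v$ need not be related to $X_h$ that way). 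Let me instead directly show $F\bar v \le \lambda + \bar v$: for $x\in X$ pick $y\in X_h$ with $\Funk(x,y)$ and $\Funk(y,x)$ small; since $F\bar v\in\Lip_1$, $(F\bar v)(x) \le (F\bar v)(y)+\Funk(x,y) = \lambda + v(y) + \Funk(x,y) = \lambda + \bar v(x) + (\bar v(y)+\Funk(x,y)-\bar v(x)) \le \lambda + \bar v(x)$ using $\bar v(x)=I_h^+v(x)\le v(y)+\Funk(x,y)$. Hence $F\bar v \le \lambda + \bar v$ on $X$ with $\bar v$ distance-like, so \Cref{dlike} gives $\rho \le \lambda$.

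For the lower bound $\rho \ge \lambda - h$: By \Cref{main} applied to $F$ on $X$, there is $w\in\Lip_1(X)$ with $Fw \ge \rho + w$. I claim $R_h w$ is a sub-eigenvector of $\hat F_h^+$ with value close to $\rho$. Set $\hat w = R_h w \in \Lip_1(X_h)$; then $\hat F_h^+ \hat w = R_h F I_h^+ R_h w$. The discrepancy is controlled by $\|I_h^+ R_h w - w\|_\infty$ on $X$: for any $x\in X$, choosing $y\in X_h$ with $\Funk$-distances at most $h$ (possible since the balls $B_H(x,h)$ cover $X$ and $\Funk\le \Hil$), one gets $0 \le I_h^+ R_h w(x) - w(x) \le h$ — the lower bound from $w$ being $1$-Lipschitz (so $w(x)\le w(y)+\Funk(x,y)$ gives... actually $I_h^+R_hw(x)=\min_y[w(y)+\Funk(x,y)]\ge w(x)$ since $w\in\Lip_1$), and the upper bound $\le w(y)+\Funk(x,y)\le w(x)+\Funk(y,x)+\Funk(x,y)\le w(x)+2h$, or $\le h$ with a one-sided ball estimate. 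Since $F$ is monotone and nonexpansive in sup-norm, $F I_h^+ R_h w \ge F w \ge \rho + w$, so $\hat F_h^+ \hat w = R_h F I_h^+ R_h w \ge \rho + R_h w = \rho + \hat w$; thus $\rho$ is a sub-eigenvalue of $\hat F_h^+$. But wait — this gives $\rho \le \lambda$ again, not the lower bound. For the other direction I use the upper bound on $I_h^+R_hw - w$: $F I_h^+ R_h w \le F(w + h\,e) = h\,e + Fw$... no, that gives $\hat F_h^+\hat w \le h + \rho + \hat w$ hence (by \Cref{main} for $\hat F_h^+$, super-eigenvalue version) $\lambda \le \rho + h$, i.e. $\rho \ge \lambda - h$. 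The main obstacle is pinning down the correct one-sided estimate $0 \le I_h^+ R_h w(x) - w(x) \le h$ uniformly on $X$ — this requires that the cover by Hilbert balls of radius $h$ gives, for each $x$, a grid point $y$ with $\Funk(x,y)\le h$, which follows from $\Funk(x,y)\le \Hil(x,y) = \Funk(x,y)+\Funk(y,x)$ and $\Hil(x,y)<h$, together with verifying that $F$, being built from nonexpansive $T_{ab}$ and the $\log\langle\cdot,e^*\rangle$ normalization, is genuinely $1$-Lipschitz and sup-norm nonexpansive so that these pointwise bounds propagate through $F$ without loss.
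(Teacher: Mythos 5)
Your upper bound, once the false starts are stripped away, lands on essentially the paper's argument: set $g=I_h^+v$, use that $Fg$ is $1$-Lipschitz to get $Fg(x)\le Fg(y)+\Funk(x,y)=\lambda+v(y)+\Funk(x,y)$ for every grid point $y$, minimize over $y$ to obtain $Fg\le\lambda+g$, and conclude $\rho\le\lambda$ from the dual characterization. That part is correct.

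The lower bound has a genuine gap. You take $w$ from \Cref{main} satisfying $Fw\ge\rho+w$, establish $w\le I_h^+R_hw\le w+h$, and then write $\hat F_h^+\hat w\le h+R_h(Fw)=h+\rho+\hat w$. The last equality is exactly what the super-eigenvector inequality does \emph{not} give you: $Fw\ge\rho+w$ provides no upper bound on $Fw$, so $\hat F_h^+\hat w\le h+\rho+\hat w$ does not follow, and without it you cannot invoke the sub-eigenvalue characterization to get $\lambda\le\rho+h$. (You noticed yourself that the inequality you \emph{can} prove, $\hat F_h^+\hat w\ge\rho+\hat w$, only re-proves $\rho\le\lambda$.) The route is salvageable: replace the super-eigenvector by an exact distance-like eigenvector of $F$ (\Cref{cor-S}), whose eigenvalue equals $\rho$, or by the $(\rho+\epsilon)$-sub-eigenvectors supplied by \Cref{th-dual} and let $\epsilon\to 0$. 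The paper avoids this detour entirely: it reuses the same $g=I_h^+v$ and the $1$-Lipschitz property of $Fg$ in the opposite direction, namely $Fg(x)\ge Fg(x_i)-\Funk(x_i,x)=\lambda+v(x_i)+\Funk(x,x_i)-\Hil(x,x_i)\ge\lambda+g(x)-h$, which exhibits $g$ directly as a super-eigenvector with value $\lambda-h$ and gives $\rho\ge\lambda-h$ by \Cref{main}, with no appeal to an eigenvector of $F$. Your hesitation about the one-sided estimate is resolvable in your favour: the covering is by Hilbert balls, and $\Funk(x,y)+\Funk(y,x)=\Hil(x,y)<h$, so $I_h^+R_hw-w\le h$ holds with $h$ and not $2h$.
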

\begin{proof}
  Let us define $g \coloneqq I_h^+ v$. It is a 1-Lipschitz function. Let $x \in X$, as $Sg$ is 1-Lipschitz we get that for all $x_i \in X_h$ we have
\(%  \begin{align*}
Sg(x) 
\leq Sg(x_i) + \Funk(x,x_i)%\\
%&
= R_h SI_h^+ v (x_i) + \Funk(x,x_i)%\\
%  &
= \lambda + v(x_i) + \Funk(x,x_i)\).
%\end{align*}
So
\(%  \begin{align*}
  Sg(x) \leq  \lambda + \min_{x_i \in X_h}[v(x_i) + \Funk(x,x_i)]%\\
  %&
  = \lambda + I_h^+ v(x) = \lambda + g(x)
\). %  \end{align*}
Now, by~\Cref{th-dual}, we have $\rho \leq \lambda$.

For the other inequality, we get from the nonexpansiveness of $Sg$ that for all $x_i \in X_h$
 \(% \begin{align*}
    Sg(x) %&
    \geq Sg(x_i) - \Funk(x_i,x)% \\
%    &\!\!\!\!\!\!\!\!\!\!\!\!
    = \lambda + v(x_i) + \Funk(x,x_i) - \Funk(x,x_i) - \Funk(x_i,x)
\). %  \end{align*}
  So 
  \(
    Sg(x) \geq \lambda + g(x) - \min_{x_i \in X_h}\Hil(x,x_i) = \lambda + g(x) - h
  \),
  which tells us that $\rho \geq \lambda - h$
\end{proof}

Therefore, to approximate numerically the value of our game, it suffices
to compute the additive eigenvalue $\specrad(\hat{F}_h^+)$. Observe
that $\hat{F}_h^+$ acts on the finite dimensional set $\Lip_1(X_h)\subset \R^n$.

To this end, we shall combine
the techniques of {\em relative value iteration}~\cite{WHITE1963373}
and {\em Krasnoselskii-Mann damping}~\cite{mann,krasno} ,
following \cite{akianmfcs}.  
We fix arbitrarily one point $\bar x\in X_h$, and define the normalized operator
\[
\hat{F}_{hn}^+ v\coloneqq \hat{F}_h^+ -[\hat{F}_h^+v] (\bar x)
\]
$\hat{F}_{hn}$ preserves the subspace
$\mathscr{L}_{x_i}(X_h)= \{ v\in \Lip_1(X_h) \mid v(\bar x)=0\}$.
Moreover, since $\hat{F}^+_h$ is a Shapley operator, it
non-expansive in Hilbert's seminorm,
$\lVert v \rVert_H \coloneqq \max_j v(x_j) - \min_j v(x_j)$,
an so does the normalized operator $\hat{F}_{hn}^+$.
Observe that the restriction of Hilbert's seminorm to
$\mathscr{L}_{\bar x}$ is a norm.

Starting from an arbitrary vector $v_0\in \mathscr{L}(\bar x)$,
we define the sequences: %Then for all $k \in \N^*$
\begin{align}
v_{k+1} &= \frac{\hat{F}_{hn}^+(v_k) + v_k}2 ,\qquad 
\lambda_{k+1} = [\hat{F}_{h}^+ v_{k+1}](\bar x) \enspace .\label{eq-rvikm}
\end{align}
This is formalized as \Cref{algo-rvikm}.

The next theorem follows from a result of Ishikawa~\cite{ishikawa},
concerning the convergence of the sequence $x_{k+1} = (T(x_k)+x_k)/2$,
for a nonexpansive operator $T$ sending a nonempty compact subset $D$ of a Banach space to itself, and from an error estimate of Baillon and Bruck~\cite{baillonbruck}.

\begin{theorem}[Corollary of~\cite{ishikawa,baillonbruck}]\label{th-bb}
  The sequence $(\lambda^k,v^k)$ converges
  to a pair $(\lambda,v)$ verifying $\hat{F}_{hn}^+ v = \lambda + v$. Moreover,
  for any eigenvector $w$ of $\hat{F}_{hn}^+$,
  \[
    \lVert v^{k+1} - v^k \rVert_H \leq \frac{2 \operatorname{dist}_H(v^0,w)}{\sqrt{\pi k}} \enspace .
    \]
\end{theorem}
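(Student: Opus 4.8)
The plan is to establish convergence of the iteration \eqref{eq-rvikm} by viewing it as a Krasnoselskii--Mann--Ishikawa averaging scheme for the nonexpansive map $\hat F_{hn}^+$ on the set $\mathscr{L}_{\bar x}(X_h)$, then transferring the conclusion back to the eigenpair $(\lambda, v)$ and finally inserting the quantitative rate of Baillon--Bruck. First I would check the hypotheses of Ishikawa's theorem: the operator $\hat F_{hn}^+$ is nonexpansive in Hilbert's seminorm $\lVert \cdot\rVert_H$ (as noted in the text, because $\hat F_h^+$ is a Shapley operator and subtracting the value at $\bar x$ preserves nonexpansiveness), and on the subspace $\mathscr{L}_{\bar x}(X_h)$ this seminorm is genuinely a norm. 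By \Cref{cor-S} together with the reduction $\specrad(\hat F_h^+)=\specrad(F_h^+)$, the operator $\hat F_h^+$ has an eigenvector, so after normalization $\hat F_{hn}^+$ has a fixed point $w$; hence the orbit $(v^k)$ stays within a bounded, hence (in finite dimension) relatively compact, set $D$ containing $w$, and $\hat F_{hn}^+$ maps $D$ into itself. Ishikawa's theorem then gives that $v^{k+1}-v^k = \tfrac12(\hat F_{hn}^+ v^k - v^k)\to 0$ and that $(v^k)$ converges to some fixed point $v$ of $\hat F_{hn}^+$.

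Next I would recover the eigenvalue. Writing $\lambda \coloneqq [\hat F_h^+ v](\bar x)$, the identity $\hat F_{hn}^+ v = v$ unfolds to $\hat F_h^+ v - \lambda = v$, i.e. $\hat F_h^+ v = \lambda + v$, which is exactly the additive eigenproblem. Continuity of $\hat F_h^+$ in the sup-norm on the finite-dimensional space $\Lip_1(X_h)$ gives $\lambda^{k+1} = [\hat F_h^+ v^{k+1}](\bar x)\to [\hat F_h^+ v](\bar x)=\lambda$, so the pair $(\lambda^k, v^k)$ converges to $(\lambda, v)$ with $\hat F_{hn}^+ v = \lambda + v$ in the sense stated (here reading the displayed equation as $\hat F_{hn}^+v = v$ up to the additive-constant normalization, consistently with the definition of $\hat F_{hn}^+$).

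For the error estimate I would invoke the Baillon--Bruck bound for the averaged iteration $v^{k+1}=\tfrac12(\hat F_{hn}^+ v^k + v^k)$: for a nonexpansive self-map of a convex subset of a normed space with a fixed point $w$, the successive differences of the Halpern/KM-averaged orbit satisfy $\lVert v^{k+1}-v^k\rVert \le \dfrac{2\,\operatorname{dist}(v^0,w)}{\sqrt{\pi k}}$. Applying this with the norm $\lVert\cdot\rVert_H$ on $\mathscr{L}_{\bar x}(X_h)$ and any eigenvector $w$ of $\hat F_{hn}^+$ yields precisely the claimed inequality $\lVert v^{k+1}-v^k\rVert_H \le \dfrac{2\operatorname{dist}_H(v^0,w)}{\sqrt{\pi k}}$.

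The main obstacle I anticipate is not any single hard estimate but the bookkeeping needed to make the black-box citations genuinely applicable: one must exhibit an explicit nonempty compact convex invariant set $D$ for $\hat F_{hn}^+$ (using the fixed point $w$ and nonexpansiveness to bound the orbit, plus finite-dimensionality for compactness), and one must be careful that Hilbert's seminorm is only a seminorm on all of $\Lip_1(X_h)$ — it is the restriction to $\mathscr{L}_{\bar x}(X_h)$, where $\hat F_{hn}^+$ actually lives, that makes it a norm and lets Ishikawa's and Baillon--Bruck's theorems apply verbatim. Everything else is a routine unfolding of definitions.
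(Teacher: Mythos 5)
Your proposal is correct and follows essentially the same route as the paper, which proves this theorem simply by invoking Ishikawa's convergence theorem for the damped iteration of a nonexpansive self-map of a compact convex set and the Baillon--Bruck rate estimate. Your additional bookkeeping (checking that $w$ is a fixed point of $\hat{F}_{hn}^+$ on $\mathscr{L}_{\bar x}(X_h)$ where $\lVert\cdot\rVert_H$ is a norm, and that the closed ball around $w$ of radius $\operatorname{dist}_H(v^0,w)$ is a compact convex invariant set) is exactly what is needed to make those citations applicable, and is consistent with the paper's intent.
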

\begin{algorithm}
  \caption{Relative value iteration with Krasnoselski-mann damping -- RVI-KM.}\label{algo-rvikm}
  Input: a desired precision $h$, and the operators $T_{a,b}$.
  Start with $v_0 =0 \in \mathscr{L}_{\bar x}(X_h), \lambda_0 = 0\in \R$.
  Compute the sequences defined by ~\eqref{eq-rvikm}, %Then for all $k \in \N^*$
until $\|v_{k+1}-v_k\|_H\leq h$. Return $\lambda_k$.
\end{algorithm}
\begin{theorem}\label{th-algo}
  The RVI-KM algorithm stops after at most
  \[ k= \frac{4}{\pi}(\frac{M^+-M^-}{h})^2
  \]
  steps, where
  \begin{align*}
M^+ &\coloneqq \max_{a,b \in \A\times \B,x,y\in X}\Funk(T_{ab}x,y) \\
M^- &\coloneqq \min_{a,b \in \A\times \B,x,y\in X}\Funk(T_{ab}x,y)
\end{align*}
and the final value of $\lambda_k$ is such that $\rho \in [-3h + \lambda_k, 2h+\lambda_k]$.
 \end{theorem}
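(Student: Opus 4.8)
The plan is to treat the two assertions separately. The iteration count will come from the Baillon--Bruck estimate in \Cref{th-bb}, once I bound the Hilbert seminorm of a fixed point of $\hat{F}_{hn}^+$ by $M^+-M^-$; the error bracket for the returned $\lambda_k$ will come from rerunning the proof of \Cref{th-effective}, picking up an extra $O(h)$ slack from the stopping test $\|v_{k+1}-v_k\|_H\le h$.

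First I would write $\hat{F}_h^+$ explicitly: using that the Funk metric is log-homogeneous in its first argument, one has, for $v\in\Lip_1(X_h)$ and $x\in X_h$, $\hat{F}_h^+v(x)=\inf_{a\in\A}\sup_{b\in\B}\min_{y\in X_h}\big(v(y)+\Funk(T_{ab}(x),y)\big)$. By the small-cone assumption $T_{ab}(x)$ lies in $K$, hence is comparable to every $y\in X_h\subset X$, and $\Funk(T_{ab}(x),y)\in[M^-,M^+]$ by the definition of $M^\pm$. Therefore the inner minimum lies between $\min_{X_h}v+M^-$ and $\min_{X_h}v+M^+$ for all $a,b$, and this bracketing survives $\inf_a\sup_b$; hence $\|\hat{F}_h^+v\|_H\le M^+-M^-$ for every $v\in\Lip_1(X_h)$. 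Applying this to the distance-like eigenvector $w$ of $\hat{F}_h^+$ provided by \Cref{cor-S}, say $\hat{F}_h^+w=\mu+w$, gives $\|w\|_H=\|\hat{F}_h^+w\|_H\le M^+-M^-$, and $w-w(\bar x)$ is a fixed point of $\hat{F}_{hn}^+$ with the same Hilbert seminorm. Since $v_0=0$, this gives $\operatorname{dist}_H(v_0,\,w-w(\bar x))=\|w\|_H\le M^+-M^-$, so \Cref{th-bb} yields $\|v_{k+1}-v_k\|_H\le 2(M^+-M^-)/\sqrt{\pi k}$, which is $\le h$ as soon as $k\ge\frac{4}{\pi}\big(\frac{M^+-M^-}{h}\big)^2$, the announced count.

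For the error bracket, suppose the algorithm has just terminated, so $\|v_{k+1}-v_k\|_H\le h$, with returned value $\lambda_k=[\hat{F}_h^+v_k](\bar x)$. From $v_{k+1}-v_k=\tfrac12\big(\hat{F}_{hn}^+v_k-v_k\big)=\tfrac12\big(\hat{F}_h^+v_k-\lambda_k-v_k\big)$ I read off $\|\hat{F}_h^+v_k-v_k\|_H\le 2h$, and since $\hat{F}_h^+v_k(\bar x)-v_k(\bar x)=\lambda_k$ is one of the values of the function $\hat{F}_h^+v_k-v_k$ on $X_h$, whose oscillation is at most $2h$, I obtain $\lambda_k-2h\le\hat{F}_h^+v_k(x)-v_k(x)\le\lambda_k+2h$ for all $x\in X_h$. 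Now set $g_k\coloneqq I_h^+v_k\in\Lip_1(X)$, which is distance-like (being a bounded $1$-Lipschitz function on the compact set $X$), and follow the proof of \Cref{th-effective}: for $x\in X$ and any $x_i\in X_h$, nonexpansiveness of $Sg_k$ gives $Sg_k(x)\le Sg_k(x_i)+\Funk(x,x_i)=\hat{F}_h^+v_k(x_i)+\Funk(x,x_i)\le\lambda_k+2h+v_k(x_i)+\Funk(x,x_i)$, and minimising over $x_i\in X_h$ yields $Sg_k\le g_k+\lambda_k+2h$ on $X$, whence $\rho\le\lambda_k+2h$ by \Cref{dlike}. Symmetrically, $Sg_k(x)\ge Sg_k(x_i)-\Funk(x_i,x)\ge\lambda_k-2h+v_k(x_i)-\Funk(x_i,x)$, and combining with $g_k(x)\le v_k(x_i)+\Funk(x,x_i)$ this is $\ge g_k(x)+\lambda_k-2h-\Hil(x,x_i)$; choosing $x_i\in X_h$ with $\Hil(x,x_i)<h$, which exists since $X\subset\bigcup_{y\in X_h}B_H(y,h)$, gives $Sg_k\ge g_k+\lambda_k-3h$ on $X$, hence $\rho\ge\lambda_k-3h$ by \Cref{main}. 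Combining the two bounds gives $\rho\in[-3h+\lambda_k,\,2h+\lambda_k]$.

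The main obstacle will be the opening step of the iteration count, the a priori estimate $\|w\|_H\le M^+-M^-$: it rests on the explicit reformulation of $\hat{F}_h^+$ through the log-homogeneity of the Funk metric, together with the small-cone assumption, which is precisely what makes $T_{ab}(x)$ comparable to the grid points and hence forces the quantities $\Funk(T_{ab}(x),y)$ to lie in $[M^-,M^+]$. Everything after that is routine bookkeeping: localising $\lambda_k$ inside the $2h$-oscillation band and pushing the resulting $O(h)$ errors through the McShane--Whitney extension $I_h^+$ and the mesh condition, exactly as in \Cref{th-effective}.
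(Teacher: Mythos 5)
Your proposal is correct and follows essentially the same route as the paper: bound the Hilbert seminorm of an eigenvector of $\hat{F}_h^+$ by $M^+-M^-$ using the explicit $\inf\sup\min$ form of the operator, invoke \Cref{th-bb} for the iteration count, and convert the stopping test into a $2h$-approximate eigenpair that is pushed through the argument of \Cref{th-effective}. The only (harmless) variations are that you bound $\lVert \hat{F}_h^+v\rVert_H\le M^+-M^-$ uniformly in $v$ rather than via the intermediate inequality $\lambda\ge M^-$, and that you re-derive the bracket $[\lambda_k-3h,\lambda_k+2h]$ by inlining the proof of \Cref{th-effective} with the extra $2h$ slack instead of first localising $\specrad(\hat{F}_h^+)$ and then citing that theorem.
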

\begin{proof}
  If $(w,\lambda)$ is an eigenpair of $\hat{F}^+_{h}$, we
  have, for all $x\in X_h$,
\begin{align*}
\lambda + w(x) 
= \inf_{a}\sup_{b}\inf_{y \in X_h}w(y) + \Funk(T_{ab}(x),y) 
\enspace.
 \end{align*}
  Then, setting
\(%\begin{align*}    
w^+ = \max_x w(x) \text{  and  } w^- = \min_x w(x),\)
we get $\lambda + w(x) \geq  M^- + w^-$ for all $x\in X_h$,
and choosing $x$ such that $w(x)=w^-$, we deduce that $\lambda \geq M^-$.
Similarly, we have $\lambda + w(x) \leq M^+ + w^-$, for all $x\in X_h$,
and so $\|w\|_H \le M^+-\lambda \leq M^+-M^-$. Then, the termination
result follows from~\Cref{th-bb}. Moreover, 
the algorithm returns $v_k$ and $\lambda_k$ such that $\delta_k\coloneqq
\hat{F}^+_h(v_k)-\lambda_k -v_k $ with $\lambda_k =[\hat{F}^+_h(v_k)](\bar x)$
satisfies $\|\delta_k\|_H\leq 2h$. Since $\delta_k(\bar x)=0$,
we deduce that $\max_x \delta_k(x) \leq \|\delta_k\|_H\leq 2h$, and
dually $\min_x \delta_k(x) \geq -2h$. It follows that
$-3h + \lambda_k + v_k \leq \hat{F}^+_h(v_k)
\leq 2h + \lambda_k + v_k$. Hence, $\specrad(\hat{F}^+_h)\in [\lambda_k-3h,\lambda_k+2h]$. Together with~\Cref{th-effective}, this gives $\rho \in [\lambda_k -3h, \lambda_k +2h]$.
\end{proof}
The exact evaluation of the operator $\hat{F}^+_{hn}$ requires finite action spaces. \Cref{cor-continuity} allows us to reduce to this case, after discretization
of the action spaces.
\begin{corollary}
  Let $\A$ and $\B$ be two compact subsets of
 the same part of $\End(\Int(C))$. Then for any $h>0$, there exists finite subsets $\A_f \subset A$ and $\B_f \subset B$ such that 
  \(
  \lvert \rho(\A,\B) - \rho(\A_f, \B_f) \rvert \leq h
  \)\hfill\qed
\end{corollary}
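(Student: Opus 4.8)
The plan is to combine \Cref{cor-continuity} with the elementary fact that any compact subset of a metric space can be approximated arbitrarily well, in Hausdorff distance, by a finite subset of itself. First I would invoke \Cref{cor-continuity}: since $\A$ and $\B$ lie in the same part $\mathscr{P}$ of $\End(\Int C)$, the Thompson metric $d_T$ is a genuine metric on $\mathscr{P}$, and the map $(\A',\B')\mapsto\rho(\A',\B')$ is $1$-Lipschitz with respect to the Hausdorff distance $\hausdorff$ induced by $d_T$ on compact subsets of $\mathscr{P}$. Thus it suffices to produce finite $\A_f\subset\A$ and $\B_f\subset\B$ with $\hausdorff(\A,\A_f)\le h/2$ and $\hausdorff(\B,\B_f)\le h/2$.

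Next I would construct these finite sets. Since $\A$ is compact in the metric space $(\mathscr{P},d_T)$, the open cover $\{B_{d_T}(a,h/2)\}_{a\in\A}$ admits a finite subcover, giving points $a_1,\dots,a_m\in\A$ with $\A\subset\bigcup_{i=1}^m B_{d_T}(a_i,h/2)$. Set $\A_f=\{a_1,\dots,a_m\}$. Then every point of $\A$ is within $d_T$-distance $h/2$ of some point of $\A_f$, so $\sup_{a\in\A}\inf_{a'\in\A_f}d_T(a,a')\le h/2$, and since $\A_f\subset\A$ the other direction of the Hausdorff distance vanishes, whence $\hausdorff(\A,\A_f)\le h/2$. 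Do the same for $\B$ to obtain $\B_f$.

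Finally I would assemble the estimate. By \Cref{cor-continuity},
\[
\lvert\rho(\A,\B)-\rho(\A_f,\B_f)\rvert
\le \hausdorff(\A,\A_f)+\hausdorff(\B,\B_f)
\le \tfrac{h}{2}+\tfrac{h}{2}=h,
\]
which is the claimed bound. (One should note in passing that $\A_f$ and $\B_f$, being finite subsets of $\mathscr{P}$, lie in the same part, so $\rho(\A_f,\B_f)$ is well-defined via $T_{AB}(x)=xAB$.)

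There is essentially no obstacle here: the statement is a direct corollary of the Lipschitz continuity already established, together with total boundedness of compact metric spaces. The only point requiring minimal care is the verification that the one-sided Hausdorff distance from a set to a finite net contained in it is controlled by the mesh of the net, which is immediate; and the implicit observation, already exploited throughout \Cref{sec-IV}, that restricting to a finite action set keeps us inside the same part and hence within the scope of \Cref{cor-continuity}.
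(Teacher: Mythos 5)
Your proof is correct and is exactly the argument the paper intends: the corollary is stated with an immediate \(\qed\) following the remark that \Cref{cor-continuity} ``allows us to reduce to this case, after discretization of the action spaces,'' i.e.\ $1$-Lipschitz dependence on the Hausdorff distance plus total boundedness of the compact action sets. Your explicit construction of the finite $h/2$-nets and the observation that the one-sided Hausdorff term vanishes for a subset merely fill in the routine details the paper omits.
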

\begin{remark}
  Every iteration of the RVI-KM algorithm requires
  the computation of $[\hat{F}^+_h(v)](x)=[FI_h^+(v)](x)$ at all the points $x$ of the grid $X_h$. When the action spaces $\A$ and $\B$ are finite, this requires
  $O(|\A||\B| |X_h|^2)$ arithmetic operations. When the cone $C$ is of dimension
  $d$, the simplex $\Delta$ is of dimension $d-1$, and so $|X_h|= O(1/h^{d-1})$. Hence, the RVI-KM algorithm
  requires a total of $O(|\A||\B|/h^{2d})$ arithmetic operations.
\end{remark}

\section{Application: game of population dynamics}\label{sec-V}
We consider an age-structured population model with three ages,
leading to a discrete-time dynamics of the form
\[
x_{k+1} = x_k L(\alpha_k,\beta_k)  , \;
x_k \in \R_{>0}^{1\times 3}, \;
x_0 \textrm{ given},
\]
where $L(\alpha_k,\beta_k)$ is the (transposed) Leslie matrix:
\[
L(\alpha_k,\beta_k) = \begin{pmatrix}
  \beta^1_k & \alpha^1_k & 0\\
  \beta^2_k & 0 & \alpha^2_k\\
  \beta^3_k & 0 & 0
\end{pmatrix} \enspace.
\]
Here, $i\in \{1,2,3\}$, denotes the age,
$\beta^i_k>0$ denotes the number of children of an individual
of age $i$, at time $k$, and for $i\in\{1,2\}$, $\alpha^i_k\in (0,1)$ denotes the proportion
of individuals having age $i$ at time $k$, which are still alive at age $i+1$
and time $k+1$. We assume that at each time, \Min\ chooses the
vector $\alpha_k\coloneqq (\alpha^1_k,\alpha^2_k)$ within a prescribed compact
set $\A$ included in $\R_{>0}^2$,
whereas \Max\ chooses the vector $\beta_k\coloneqq (\beta^1_k,\beta^2_k,\beta^3_k)$
within another prescribed compact set
$\B$ included in $\R_{>0}^3$. We refer the reader to~\cite{Perthame2007}
for more information on age-structured models (in a broader
infinite dimensional setting).

This can be interpreted as a model of population of mosquitos with three different development stages. \Min wants to limit the growth of this populaton by influencing the survival rate $\alpha_k$,
whereas \Max (the population of insects) wants to maximize its growth, and can influence the reproduction rate $\beta_k$.

For $\beta\in \A$ and $\alpha\in \B$, and $x\in \R_{>0}^3$, we set
\[
T_{ab}(x)=x L(\alpha,\beta)
\]
The assumptions of \cref{main} are verified:
Leslie matrices preserve the interior
of the cone $C=\R_{>0}^3$; moreover, our assumption of compactness of $\A\subset \R_{>0}^2$ and
$\B_{>0}^3$ entail that the entries of $\alpha$ and $\beta$ are bounded away from zero,
which entails that all the matrices $L(\alpha,\beta)$ with $(\alpha,\beta)\in \A\times \B$
are included in a common part of $\End(\Int C)$, see~\Cref{ex-support}.

Specifically, we take the action spaces
\begin{align*}
  \A &\!\coloneqq\! \{ (0.9,0.6), (0.6,0.9),(0.7,0.7)\} \!=\! \{\alpha_1, \alpha_2, \alpha_3\}\\
  \B &\!\coloneqq\! \{(0.2,1.4,1.4), (0.2,1.7,1), (0.2,1,1.7)\} 
\!=\! \{\beta_1, \beta_2, \beta_3\}
\end{align*}

The RVI-KM algorithm was implemented with python 3.11.11 on a Macbook pro with an Apple M3 pro 11 core CPU and 18 GO of LPDDR5 ram.
\begin{figure}[h]
  \includegraphics[scale=0.65]{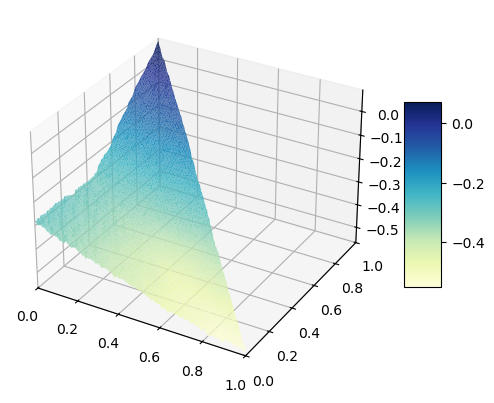}
  \caption{Eigenfunction computed with 8646 points of discretization}
\end{figure}

\begin{table}[h!]
  \centering
  \begin{tabular}{|c c c c|}
    \hline
    Discretization points & Value & Iterations & Runtime (s) \\
    \hline \hline
    3321 & 1.3158 & 10 & 4.23\\
    5151 & 1.3148 & 11 & 11.39\\
    8646 & 1.3147 & 12 & 52.42\\
    13041 & 1.3120 & 12 & 1584.62\\
    \hline
  \end{tabular}
  \caption{Benchmarks}
\end{table}
Starting from the center of the simplex, $x_0 = (1/3,1/3,1/3)$, we get the sequence of optimal moves
{\small\(
\alpha_3\beta_1\alpha_2\beta_1\alpha_2\beta_2\alpha_2\beta_1\alpha_2\beta_2\alpha_2\beta_2\alpha_2\beta_1\alpha_2\beta_1(\alpha_2\beta_2)^\omega\)},
where $(\cdot)^\omega$ denotes the periodic repetion of a string.
We notice a ``Turnpike property'': all optimal trajectories converge to the point $x^*=(0.5619,0.2590,0.1790)$ which is a "projective fixed point" of the matrix $L(\alpha_2,\beta_2)$, i.e $\frac{x^*L(\alpha_2,\beta_2)}{x^*L(\alpha_2,\beta_2)e}= x^*$.

\section{Conclusion}
We showed that the competitive spectral radius of two families of matrices is continuous
under a positivity condition similar to the \emph{embedded cone} condition used by Jungers to show the continuity
of the lower spectral radius of a family of positive matrices. We provided
an algorithm to approximate the competive spectral radius, under the same assumption.
The lower spectral radius of family of invertible
matrices is continuous under a \emph{1-domination} or \emph{multicone}
condition (see~\cite{Bochi2014}, and also~\cite{BochiGourmelon,Avila2010}), it would be interesting to see whether this can be extended to the two-player case.
Understanding the continuity and approximability properties of the competitive spectral radius in more general situations is an interesting open question.

Another interesting problem would be to modify our numerical scheme (\Cref{algo-rvikm}), avoiding the evaluation of the ``full'' interpolation operator
$I_h^+$, which is the current bottleneck.
%%%%%%%%%%%%%%%%%%%%%%%%%%%%%%%%%%%%%%%%%%%%%%%%%%%%%%%%%%%%%%%%%%%%%%%%%%%%%%%%
%\section{ACKNOWLEDGMENTS}

%%%%%%%%%%%%%%%%%%%%%%%%%%%%%%%%%%%%%%%%%%%%%%%%%%%%%%%%%%%%%%%%%%%%%%%%%%%%%%%%

\printbibliography[heading=bibintoc]

@Preamble{
"\def\cprime{$'$} "
}

@article{Livic1972,
  title = {Cohomology of dynamical systems},
  volume = {6},
  ISSN = {0025-5726},
  url = {http://dx.doi.org/10.1070/IM1972v006n06ABEH001919},
  DOI = {10.1070/im1972v006n06abeh001919},
  number = {6},
  journal = {Mathematics of the USSR-Izvestiya},
  publisher = {Steklov Mathematical Institute},
  author = {Livšic,  A N},
  year = {1972},
  month = dec,
  pages = {1278–1301}
}

@article{akian2019,
  TITLE = {{The operator approach to entropy games}},
  LONGAUTHOR = {Akian, Marianne and Gaubert, St{\'e}phane and Grand-Cl{\'e}ment, Julien and Guillaud, J{\'e}r{\'e}mie},
  AUTHOR = {Akian, M. and Gaubert, S. and Grand-Cl{\'e}ment, J. and Guillaud, J.},
  Opti_url = {https://hal.archives-ouvertes.fr/hal-02143807},
  JOURNAL = {{Theory of Computing Systems}},
  PUBLISHER = {{Springer Verlag}},
  VOLUME = {63},
  PAGES = {1089-1130},
  YEAR = {2019},
  Opti_doi = {10.1007/s00224-019-09925-z},
  KEYWORDS = {Stochastic games ; Shapley operators ; Policy iteration ; Perron eigenvalues ; Risk sensitive control},
  HAL_ID = {hal-02143807},
  HAL_VERSION = {v1},
}

@article{WHITE1963373,
title = {Dynamic programming, {M}arkov chains, and the method of successive approximations},
journal = {Journal of Mathematical Analysis and Applications},
volume = {6},
number = {3},
pages = {373-376},
year = {1963},
issn = {0022-247X},
Opti_doi = {https://doi.org/10.1016/0022-247X(63)90017-9},
Opti_url = {https://www.sciencedirect.com/science/article/pii/0022247X63900179},
author = {D.J White}
}

@InProceedings{asarin_entropy,
author =	{Asarin, E. and Cervelle, J. and Degorre, A. and Dima, C. and Horn, F. and Kozyakin, V.},
  title =	{Entropy Games and Matrix Multiplication Games},
    booktitle =	{Proceedings of the 33rd International Symposium on Theoretical Aspects of Computer Science (STACS)},
  pages =	{11:1--11:14},
  fseries =	{LIPIcs. Leibniz International Proceedings in Informatics},
  series = {LIPIcs. Leibniz Int. Proc. Inform.},
  year =	{2016},
  volume =	{47},
  Opti_doi =		{10.4230/LIPIcs.STACS.2016.11},
  mypublisher =	{Schloss Dagstuhl--Leibniz-Zentrum f{\"u}r Informatik},
  myaddress =	{Wadern},
}

@article{ishikawa,
 ISSN = {00029939, 10886826},
 Opti_url = {http://www.jstor.org/stable/2042038},
 abstract = {The following result is shown. If $T$ is a nonexpansive mapping from a closed convex subset $D$ of a Banach space into a compact subset of $D$ and $x_1$ is any point in $D$, then the sequence $\{x_n\}$ defined by $x_{n + 1} = 2^{-1}(x_n + Tx_n)$ converges to a fixed point of $T$. As a matter of fact, a theorem which includes this result is proved. Furthermore, a similar result is obtained under certain restrictions which do not imply the assumption on the compactness of $T$.},
 longauthor = {Shiro Ishikawa},
author = {S. Ishikawa},
 longjournal = {Proceedings of the American Mathematical Society},
 journal = {Proc. Amer. Math. Soc.},
 number = {1},
 pages = {65--71},
 publisher = {American Mathematical Society},
 title = {Fixed Points and Iteration of a Nonexpansive Mapping in a {B}anach Space},
 volume = {59},
 year = {1976}
}

@article{krasno,
author={M. A. Krasnosel'ski\u{i}}, 
title={Two remarks on the method of successive approximations},
publisher={Akademiya Nauk SSSR i Moskovskoe Matematicheskoe Obshchestvo},
longjournal={Uspekhi Matematicheskikh Nauk},
journal={Uspekhi Mat. Nauk},
volume=10,
year=1955,
pages={123--127},
}

@article{mann,
author={W. R. Mann},
title={Mean value methods in iteration},
journal={Proc. Amer. Math. Soc.},
volume=4,
year=1953, 
pages={506--510},
}

@Book{nussbaumlemmens,
  author    = {B. Lemmens and R. Nussbaum},
  publisher = {Cambridge University Press},
  title     = {Nonlinear Perron-Frobenius Theory},
  year      = {2012},
  month     = {5},
  series    = {Cambridge Tracts in Mathematics},
  volume    = {189},
  abstract  = {In the past several decades the classical Perron?Frobenius theory for nonnegative matrices has been extended to obtain remarkably precise and beautiful results for classes of nonlinear maps. This nonlinear Perron?Frobenius theory has found significant uses in computer science, mathematical biology, game theory and the study of dynamical systems. This is the first comprehensive and unified introduction to nonlinear Perron?Frobenius theory suitable for graduate students and researchers entering the field for the first time. It acquaints the reader with recent developments and provides a guide to challenging open problems. To enhance accessibility, the focus is on finite dimensional nonlinear Perron?Frobenius theory, but pointers are provided to infinite dimensional results. Prerequisites are little more than basic real analysis and topology.},
  Opti_url     = {http://kar.kent.ac.uk/28448/},
}

@article{stott2020,
Author = {S. Gaubert and N. Stott},
Title = {A convergent hierarchy of non-linear eigenproblems to compute the joint spectral radius of nonnegative matrices},
Year = {2020},
myEprint = {1805.03284},
volume=10,
number=3,
pages={573--590},
Opti_doi={10.3934/mcrf.2020011},
longjournal={Mathematical Control and Related Fields},
journal={Math. Control and Related Fields},
}

@inproceedings{baillonbruck,
author={J. B. Baillon and R. E. Bruck},
title={Optimal rates of asymptotic regularity for averaged nonexpansive mappings}, 
booktitle={Proc. of the Second Int. Conference on Fixed
Point Theory and Appl.},
editor={K. K. Tan},
publisher={World Scientific},
year=1992,
pages={27--66},
}

@Article{GAUBERT_2011,
Author = {Gaubert, St{\'e}phane and Vigeral, Guillaume},
 Title = {A maximin characterisation of the escape rate of non-expansive mappings in metrically convex spaces},
 FJournal = {Mathematical Proceedings of the Cambridge Philosophical Society},
 Journal = {Math. Proc. Camb. Philos. Soc.},
 ISSN = {0305-0041},
 Volume = {152},
 Number = {2},
 Pages = {341--363},
 Year = {2012},
 myLanguage = {English},
 DOI = {10.1017/S0305004111000673},
 Keywords = {47H09,54E40,54H25},
 zbMATH = {6016672},
 Zbl = {1255.47053}
}

@article{Papadopoulos2008,
title={Weak Finsler structures and the Funk weak metric},
volume={147}, DOI={10.1017/S0305004109002461}, number={2}, journal={Math. Proc.  Cambridge Phil. Soc.}, author={Papadopoulos, Athanase and Troyanov, Marc}, year={2009}, pages={419–437},
}

@Book{Jungers_2009,
  author    = {Raphaël Jungers},
  publisher = {Springer Berlin Heidelberg},
  title     = {The Joint Spectral Radius},
  year      = {2009},
  isbn      = {978-0262526265},
 Opti_doi       = {https://doi.org/10.1007/978-3-540-95980-9},
}

@Article{Blondel_2000,
  author    = {Vincent D. Blondel and John N. Tsitsiklis},
  journal   = {Systems Control Letters},
  title     = {The boundedness of all products of a pair of matrices is undecidable},
  year      = {2000},
  month     = {oct},
  number    = {2},
  pages     = {135--140},
  volume    = {41},
 Opti_doi       = {https://doi.org/10.1016/S0167-6911(00)00049-9},
  publisher = {Elsevier {BV}},
}

@Article{Protasov2021,

author = {Protasov, Vladimir Y.},
title = {The Barabanov Norm is Generically Unique, Simple, and Easily Computed},
journal = {SIAM Journal on Control and Optimization},
volume = {60},
number = {4},
pages = {2246-2267},
year = {2022},
mydoi = {10.1137/21M1426821},

myURL = { 
    
        https://doi.org/10.1137/21M1426821
    
    

},
myeprint = { 
    
        https://doi.org/10.1137/21M1426821
},
    abstract = { We analyze the maximal growth of trajectories of discrete-time linear switching system, i.e., controlled linear systems with the control set being an arbitrary compact set of matrices. This is done by applying the optimal convex Lyapunov function called the Barabanov norm, which provides a very refined analysis of trajectories. Until recently that notion remained rather theoretical apart from special cases. In 2015 N. Guglielmi and M. Zennaro [SIAM J. Matrix Anal. Appl., 36 (2015), pp. 634--655] showed that many systems possess at least one efficiently computed Barabanov norm. In this paper we classify all possible Barabanov norms and prove that, under mild assumptions, which can be verified algorithmically, those norms are unique and are either piecewise-linear or piecewise-quadratic. For some narrow classes of systems, there are more complicated Barabanov norms, but they can still be classified and constructed. Using those results we find all trajectories of the fastest growth. Examples and numerical results are provided. },
}

@Article{Guglielmi2011,
title = {Exact Computation of Joint Spectral Characteristics of Linear Operators},
  volume = {13},
  ISSN = {1615-3383},
  url = {http://dx.doi.org/10.1007/s10208-012-9121-0},
  DOI = {10.1007/s10208-012-9121-0},
  number = {1},
  journal = {Foundations of Computational Mathematics},
  publisher = {Springer Science and Business Media LLC},
  author = {Guglielmi,  Nicola and Protasov,  Vladimir},
  year = {2012},
  mymonth = may,
  pages = {37–97}
}

@Article{Gripenberg_1996,
  author    = {Gustaf Gripenberg},
  journal   = {Linear Algebra and its Applications},
  title     = {Computing the joint spectral radius},
  year      = {1996},
  mymonth     = {feb},
  pages     = {43--60},
  volume    = {234},
 Opti_doi       = {https://doi.org/10.1016/0024-3795(94)00082-4},
  publisher = {Elsevier {BV}},
}

@Article{Bousch_2001,
  author    = {Thierry Bousch and Jean Mairesse},
  longjournal   = {Journal of the American Mathematical Society},
  journal   = {J. Amer. Math. Soc.},
  title     = {Asymptotic height optimization for topical {IFS}, Tetris heaps, and the finiteness conjecture},
  year      = {2001},
  mymonth     = {sep},
  number    = {1},
  pages     = {77--111},
  volume    = {15},
 Opti_doi       = {10.1090/S0894-0347-01-00378-2},
  publisher = {American Mathematical Society ({AMS})},
}

@article{Breuillard2021,
  title = {On the joint spectral radius for isometries of non-positively curved spaces and uniform growth},
  volume = {71},
  ISSN = {1777-5310},
  url = {http://dx.doi.org/10.5802/aif.3374},
  DOI = {10.5802/aif.3374},
  number = {1},
  journal = {Ann. Inst. Fourier},
  publisher = {Cellule MathDoc/CEDRAM},
  author = {Breuillard,  Emmanuel and Fujiwara,  Koji},
  year = {2021},
  mymonth = jun,
  pages = {317–391}
}

@article {funk,
    LONGAUTHOR = {Funk, Paul},
AUTHOR = {Funk, P.},
     TITLE = {\"{U}ber {G}eometrien, bei denen die {G}eraden die
              {K}\"urzesten sind},
   JOURNAL = {Math. Ann.},
  FJOURNAL = {Mathematische Annalen},
    VOLUME = {101},
      YEAR = {1929},
    NUMBER = {1},
     PAGES = {226--237},
      ISSN = {0025-5831},
     CODEN = {MAANA},
   MRCLASS = {Contributed Item},
  MRNUMBER = {1512527},
       DOI = {10.1007/BF01454835},
       URL = {http://dx.doi.org/10.1007/BF01454835},
}

@misc{AGM2024,
  doi = {10.48550/ARXIV.2410.21097},
  url = {https://arxiv.org/abs/2410.21097},
  author = {Akian,  Marianne and Gaubert,  Stéphane and Marchesini,  Loïc},
  keywords = {Optimization and Control (math.OC),  Dynamical Systems (math.DS),  FOS: Mathematics,  FOS: Mathematics},
  title = {The Competitive Spectral Radius of Families of Nonexpansive Mappings},
  note = {arXiv:2410.21097},
  year = {2024},
  copyright = {arXiv.org perpetual,  non-exclusive license}
}

@article{McShane1934,
  title = {Extension of range of functions},
  volume = {40},
  ISSN = {1088-9485},
  url = {http://dx.doi.org/10.1090/S0002-9904-1934-05978-0},
  DOI = {10.1090/s0002-9904-1934-05978-0},
  number = {12},
  journal = {Bulletin of the American Mathematical Society},
  publisher = {American Mathematical Society (AMS)},
  author = {McShane,  E. J.},
  year = {1934},
  pages = {837–842}
}

@article{Whitney1934,
  title = {Analytic extensions of differentiable functions defined in closed sets},
  volume = {36},
  ISSN = {1088-6850},
  url = {http://dx.doi.org/10.1090/S0002-9947-1934-1501735-3},
  DOI = {10.1090/s0002-9947-1934-1501735-3},
  number = {1},
  journal = {Transactions of the American Mathematical Society},
  publisher = {American Mathematical Society (AMS)},
  author = {Whitney,  Hassler},
  year = {1934},
  pages = {63–89}
}

@article{Bochi2014,
  title = {Continuity properties of the lower spectral radius},
  volume = {110},
  ISSN = {0024-6115},
  url = {http://dx.doi.org/10.1112/plms/pdu058},
  DOI = {10.1112/plms/pdu058},
  number = {2},
  journal = {Proceedings of the London Mathematical Society},
  publisher = {Wiley},
  author = {Bochi,  Jairo and Morris,  Ian D.},
  year = {2014},
  month = dec,
  pages = {477–509}
}

@article{Jungers2012,
 title = {On asymptotic properties of matrix semigroups with an invariant cone},
  volume = {437},
  ISSN = {0024-3795},
  url = {http://dx.doi.org/10.1016/j.laa.2012.04.006},
  DOI = {10.1016/j.laa.2012.04.006},
  number = {5},
  journal = {Linear Algebra and its Applications},
  publisher = {Elsevier BV},
  author = {Jungers,  Raphaël M.},
  year = {2012},
  month = sep,
  pages = {1205–1214}
}

@misc{Guglielmi2024,
  doi = {10.48550/ARXIV.2409.16822},
  url = {https://arxiv.org/abs/2409.16822},
  author = {Guglielmi,  Nicola and Maiale,  Francesco Paolo},
  keywords = {Numerical Analysis (math.NA),  FOS: Mathematics,  FOS: Mathematics,  68Q25,  68R10,  68U05},
  title = {Gripenberg-like algorithm for the lower spectral radius},
  note = {arXiv:2409.16822},
  year = {2024},
  copyright = {arXiv.org perpetual,  non-exclusive license}
}

@article{ahmadi,
author = {Ahmadi, Amir Ali and Jungers, Rapha\"{e}l M. and Parrilo, Pablo A. and Roozbehani, Mardavij},
title = {Joint Spectral Radius and Path-Complete Graph Lyapunov Functions},
journal = {SIAM Journal on Control and Optimization},
volume = {52},
number = {1},
pages = {687-717},
year = {2014},
mydoi = {10.1137/110855272},

myURL = { 
    
        https://doi.org/10.1137/110855272
    
    

},
myeprint = { 
    
        https://doi.org/10.1137/110855272
    
    

}
,
    abstract = { We introduce the framework of path-complete graph Lyapunov functions for approximation of the joint spectral radius. The approach is based on the analysis of the underlying switched system via inequalities imposed among multiple Lyapunov functions associated to a labeled directed graph. Inspired by concepts in automata theory and symbolic dynamics, we define a class of graphs called path-complete graphs, and show that any such graph gives rise to a method for proving stability of the switched system. This enables us to derive several asymptotically tight hierarchies of semidefinite programming relaxations that unify and generalize many existing techniques such as common quadratic, common sum of squares, path-dependent quadratic, and maximum/minimum-of-quadratics Lyapunov functions. We compare the quality of approximation obtained by certain classes of path-complete graphs including a family of dual graphs and all path-complete graphs with two nodes on an alphabet of two matrices. We derive approximation guarantees for several families of path-complete graphs, such as the De Bruijn graphs. This provides worst-case performance bounds for path-dependent quadratic Lyapunov functions and a constructive converse Lyapunov theorem for maximum/minimum-of-quadratics Lyapunov functions. }
}

@book{Perthame2007,
  title = {Transport Equations in Biology},
  ISBN = {9783764378424},
  ISSN = {1660-8046},
  url = {http://dx.doi.org/10.1007/978-3-7643-7842-4},
  DOI = {10.1007/978-3-7643-7842-4},
  journal = {Frontiers in Mathematics},
  publisher = {Birkh\"{a}user Basel},
  author = {Perthame,  Benoît},
  year = {2007}
}

@inproceedings{akianmfcs,
  doi = {10.4230/LIPICS.MFCS.2023.10},
  url = {https://drops.dagstuhl.de/opus/volltexte/2023/18544/},
  author = {Akian,  Marianne and Gaubert,  Stéphane and Naepels,  Ulysse and Terver,  Basile},
  booktitle={48th International Symposium on Mathematical Foundations of Computer Science (MFCS 2023)},
  keywords = {Stochastic mean-payoff games,  concurrent games,  entropy games,  relative value iteration,  Krasnoselskii-Mann fixed point algorithm,  Hilbert projective metric,  Theory of computation → Algorithmic game theory},
  mylanguage = {en},
  title = {Solving Irreducible Stochastic Mean-Payoff Games and Entropy Games by Relative Krasnoselskii-Mann Iteration},
  mypublisher = {Schloss Dagstuhl - Leibniz-Zentrum f\"{u}r Informatik},
  year = {2023},
  copyright = {Creative Commons Attribution 4.0 International license}
}

@article{BochiGourmelon,
  title = {Some characterizations of domination},
  volume = {263},
  ISSN = {1432-1823},
  url = {http://dx.doi.org/10.1007/s00209-009-0494-y},
  DOI = {10.1007/s00209-009-0494-y},
  number = {1},
  journal = {Math. Z.},
  publisher = {Springer Science and Business Media LLC},
  author = {Bochi,  Jairo and Gourmelon,  Nicolas},
  year = {2009},
  month = mar,
  pages = {221–231}
}

@article{Avila2010,
  title = {Uniformly hyperbolic finite-valued $\mathrm{SL}(2, \mathbb{R})$-cocycles},
  volume = {85},
  ISSN = {1420-8946},
  url = {http://dx.doi.org/10.4171/CMH/212},
  DOI = {10.4171/cmh/212},
  number = {4},
  journal = {Commentarii Mathematici Helvetici},
  publisher = {European Mathematical Society - EMS - Publishing House GmbH},
  author = {Avila,  Artur and Bochi,  Jairo and Yoccoz,  Jean-Christophe},
  year = {2010},
  month = aug,
  pages = {813–884}
}

@BOOK{maslovkolokoltsov95,
author={V. Kolokoltsov and V Maslov},
title={Idempotent analysis and applications},
publisher={Kluwer Acad. Publisher},
year={1997},
}

@article{Savchenko1999,
  title = {Cohomological inequalities for finite topological Markov chains},
  volume = {33},
  ISSN = {1573-8485},
  url = {http://dx.doi.org/10.1007/BF02465212},
  DOI = {10.1007/bf02465212},
  number = {3},
  journal = {Functional Analysis and Its Applications},
  publisher = {Springer Science and Business Media LLC},
  author = {Savchenko,  S. V.},
  year = {1999},
  month = jul,
  pages = {236–238}
}

@article{Bousch2001,
  title = {La condition de Walters},
  volume = {34},
  ISSN = {0012-9593},
  url = {http://dx.doi.org/10.1016/s0012-9593(00)01062-4},
  DOI = {10.1016/s0012-9593(00)01062-4},
  number = {2},
  journal = {Annales Scientifiques de l’École Normale Supérieure},
  publisher = {Societe Mathematique de France},
  author = {Bousch,  T.},
  year = {2001},
  month = mar,
  pages = {287–311}
}

@article{Jenkinson2018,
  title = {Ergodic optimization in dynamical systems},
  volume = {39},
  ISSN = {1469-4417},
  url = {http://dx.doi.org/10.1017/etds.2017.142},
  DOI = {10.1017/etds.2017.142},
  number = {10},
  journal = {Ergodic Theory and Dynamical Systems},
  publisher = {Cambridge University Press (CUP)},
  author = {Jenkinson,  Oliver},
  year = {2018},
  month = jan,
  pages = {2593–2618}
}

@inbook{Heil1995,
  title = {Continuity of the Joint Spectral Radius: Application to Wavelets},
  ISBN = {9781461242284},
  ISSN = {0940-6573},
  url = {http://dx.doi.org/10.1007/978-1-4612-4228-4_4},
  DOI = {10.1007/978-1-4612-4228-4_4},
  booktitle = {Linear Algebra for Signal Processing},
  publisher = {Springer New York},
  author = {Heil,  Christopher and Strang,  Gilbert},
  year = {1995},
  pages = {51–61}
}

@article{Wirth2023,
  doi = {10.48550/ARXIV.2311.18633},
  url = {https://arxiv.org/abs/2311.18633},
  author = {Epperlein,  Jeremias and Wirth,  Fabian},
  keywords = {Dynamical Systems (math.DS),  Optimization and Control (math.OC),  Spectral Theory (math.SP),  FOS: Mathematics,  FOS: Mathematics,  15A18,  47A30,  47A55,  26B35},
  title = {The joint spectral radius is pointwise H\"{o}lder continuous},
  publisher = {arXiv},
  year = {2023},
  copyright = {arXiv.org perpetual,  non-exclusive license}
}

%\section{APPENDIX}
\end{document}